\DeclareMathOperator{\dive}{div}
\def\ds{\displaystyle}
\def\eps{{\varepsilon}}
\def\O{\Omega}
\def\R{\mathbb{R}}
\def\A{\mathcal{A}}
\def\E{\mathcal{E}}
\def\F{\mathcal{F}}
\def\HH{\mathcal{H}}
\def\M{\mathcal{M}}
\def\Dr{D}
\newcommand{\be}{\begin{equation}}
\newcommand{\ee}{\end{equation}}
\newcommand{\bib}[4]{\bibitem{#1}{\sc#2: }{\it#3. }{#4.}}
\newcommand{\cp}{\mathop{\rm cap}\nolimits}
\newcommand{\ind}{\mathbbm{1}}
\numberwithin{equation}{section}
\theoremstyle{plain}
\newtheorem{theo}{Theorem}[section]
\newtheorem{lemma}[theo]{Lemma}
\newtheorem{cor}[theo]{Corollary}
\newtheorem{prop}[theo]{Proposition}
\theoremstyle{remark}
\newtheorem{rema}[theo]{Remark}
\newtheorem{exam}[theo]{Example}
\title[A shape optimal control problem and its probabilistic counterpart]{A shape optimal control problem\\ and its probabilistic counterpart}
\author{Giuseppe Buttazzo, Bozhidar Velichkov}
\begin{document}

\maketitle

\begin{abstract}
In this paper we consider a shape optimization problem in which the data in the cost functional and in the state equation may change sign, and so no monotonicity assumption is satisfied. Nevertheless, we are able to prove that an optimal domain exists. We also deduce some necessary conditions of optimality for the optimal domain. The results are applied to show the existence of an optimal domain in the case where the cost functional is completely identified, while the right-hand side in the state equation is only known up to a probability $P$ in the space $L^2(D)$.
\end{abstract}

\textbf{Keywords:} shape optimization, free boundary, capacitary measures, stochastic optimization

\textbf{2010 Mathematics Subject Classification:} 49Q10, 49J45, 49A22, 35J25, 49B60

%%%%%%%%%%%%%%%%%%%%%%%%%%%%%%%%%%%%%%%%%%%%%%%%%%%%%%%%%%%%
\section{Introduction}\label{sintro}

In this paper we consider a shape optimization problem of the form
$$\min\big\{F(\O)\ :\ \O\in\A\big\}$$
where $F$ is the shape cost function and $\A$ the class of admissible domains. For this kind of problems in general one should not expect the existence of an optimal domain, since minimizing sequences could be made of finely perforated domains, leading at the limit to existence of only relaxed solutions that are not domains but Borel measures. In some particular cases however an optimal domain exists; the most general existence result providing optimal solutions that are domains and not measures is still given by Theorem 2.5 of \cite{budm93} (see also \cite{bdm91}), where the crucial assumption is that the shape cost functional $F$ is monotone decreasing with respect to the set inclusion. A similar result for monotone costs in the framework of optimization problems for Schr\"odinger potentials has been obtained in \cite{bgrv14}.

The cost functional $F$ we consider here is not in general monotone decreasing for the set inclusion; nevertheless we are able to prove the existence of an optimal domain for it. 
%We use several notions related to capacity theory, quasi open sets, and capacitary measures; for the sake of brevity we do not recall all the tools that we use and refer to the book \cite{bubu05} for all details concerning these notions. 
We fix:
\begin{itemize}
\item a bounded Lipschitz domain $D\subset\R^d$,
\item a right-hand side $f\in L^2(D)$,
\item a cost coefficient $g\in L^2(D)$,
\end{itemize}
and we consider the admissible class of domains
\be\label{admclass}
\A=\big\{\O\subset D,\ \O\hbox{ quasi open, }|\O|\le1\big\},
\ee
where $|\cdot|$ denotes the Lebesgue measure in $\R^d$. In order the problem be nontrivial we assume that $|D|>1$.

%%%%%%%%%%%%%%%%%%%%%%%%%%%%%%
\subsection{Statement of the problem and main results}\label{sstatem}

For every $\O\in\A$ we denote by $u_\O$ the unique solution of the Dirichlet problem
\be\label{pde}
-\Delta u=f\quad\hbox{in}\quad\O,\qquad u\in H^1_0(\O),
\ee
where $H^1_0(\O)$ is the Sobolev space of functions in $H^1(\R^d)$ vanishing capacity quasi everywhere outside $\O$. The optimization problem we are dealing with is
\be\label{minpb}
\min\left\{\int_D g(x)u_\O(x)\,dx\ :\ \O\in\A\right\}.
\ee
Note that, by the definition of $u_\O$, problem \eqref{minpb} is an optimal control problem, where $H^1_0(D)$ is the space of states, $\A$ is the set of controls, \eqref{pde} is the state equation, and $\int_D g(x)u_\O(x)\,dx$ is the cost function. We stress the fact that we do not assume any sign condition on the data $f,g$.

It is well known that in the special case $g=-f/2$ the optimization problem \eqref{minpb} can be written, through an Euler-Lagrange derivation and an integration by parts, as
$$\min\big\{\E(\O)\ :\ \O\in\A\big\}$$
where $\E(\O)$ is the Dirichlet energy
$$\E(\O)=\min\left\{\int\Big[\frac12|\nabla u|^2-f(x)u\Big]\,dx\ :\ u\in H^1_0(\O)\right\}.$$
This would allow to see easily, thanks to the inclusion of the Sobolev spaces
$$\O_1\subset\O_2\ \Longrightarrow\ H^1_0(\O_1)\subset H^1_0(\O_2),$$
that the shape function $\E(\O)$ is decreasing with respect to the set inclusion, and then an immediate application of the existence Theorem 2.5 of \cite{budm93} would give a solution $\O_{opt}$ of problem \eqref{minpb}, with the additional property that $|\O_{opt}|=1$.

The same conclusion would easily hold when $f\ge0$ and $g\le0$; indeed, in this case, thanks to the maximum principle, the solutions $u_\O$ would be monotonically increasing with respect to $\O$, and again the shape cost function $\O\mapsto\int_D g(x)u_\O(x)\,dx$ would turn out to be decreasing with respect to $\O$, providing then (again by the existence Theorem 2.5 of \cite{budm93}) an optimal solution $\O_{opt}$ of problem \eqref{minpb}, with $|\O_{opt}|=1$.

On the contrary, when $f$ and $g$ are general functions in $L^2(D)$, the existence Theorem 2.5 of \cite{budm93} cannot be applied and the existence of an optimal domain for the minimization problem \eqref{minpb} requires a deeper investigation. Our main existence result is the following.

\begin{theo}\label{t:existence}
Let $f,g\in L^2(D)$ be given; then the minimization problem \eqref{minpb} admits a solution $\O_{opt}$ in the admissible class $\A$.
\end{theo}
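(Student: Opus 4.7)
\textbf{Proof plan for Theorem \ref{t:existence}.}

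The plan is to embed the problem into the Dal Maso--Mosco class $\M_0(D)$ of capacitary measures on $D$, on which the state equation extends to $-\Delta u+\mu u=f$ with unique solution $u_\mu\in H^1_0(D)\cap L^2(\mu)$ and the embedding $\O\mapsto\mu_\O=\infty_{D\setminus\O}$ preserves the cost $u_{\mu_\O}=u_\O$. Three structural facts will be used: the sequential $\gamma$-compactness of $\M_0(D)$; the $\gamma$-continuity of $\mu\mapsto u_\mu$ valued in $L^2(D)$; and the $\gamma$-lower semicontinuity of $\mu\mapsto|\{w_\mu>0\}|$, where $w_\mu=u_\mu[1]$ denotes the torsion function. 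Together these yield the relaxation step: for any minimizing sequence $\O_n\in\A$, extract a subsequence with $\mu_{\O_n}\xrightarrow{\gamma}\mu^*\in\M_0(D)$; the cost passes to the limit, so $\mu^*$ realises the relaxed infimum, while the volume constraint passes down as $|\{w_{\mu^*}>0\}|\le 1$.

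A convenient auxiliary identity, which I would record first, is
\[
\int_D g\,u_\O\,dx\;=\;\E_f(\O)+\E_g(\O)-\E_{f+g}(\O),
\]
obtained by using the adjoint state $p_\O=u_\O[g]$ as a test function in \eqref{pde} and polarising, where $\E_h(\O)=-\tfrac12\int_D h\,u_\O[h]\,dx$ is the Dirichlet energy associated to $h\in L^2(D)$. This expresses the cost as a combination of three shape functionals, each of which is $\gamma$-continuous and monotone decreasing for set inclusion; it is precisely the $-\E_{f+g}$ term that defeats a termwise application of the Buttazzo--Dal Maso existence theorem \cite{budm93}.

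The main task, and the main obstacle, is to pass from the relaxed minimizer $\mu^*$ back to a genuine domain. The natural candidate is $\O^*=\{w_{\mu^*}>0\}\in\A$: each state function $u_{\mu^*}[h]$ with $h\in\{f,g,f+g\}$ vanishes q.e.\ on $D\setminus\O^*$ and hence already lies in $H^1_0(\O^*)$. One then needs $F(\O^*)\le F(\mu^*)$, after which equality follows from the optimality of $\mu^*$. My plan is to test the optimality of $\mu^*$ against the one-parameter family of admissible capacitary competitors $\mu_t^E:=\mu^*-t\,\mu^*\llcorner E$, $t\in[0,1]$, where $E\subset\O^*$ is Borel (all of which are admissible because $\{w_{\mu_t^E}>0\}=\O^*$), compute the first variation of $F$ at $t=0$, and deduce via the displayed identity that $u_{\mu^*}[h]=u_{\O^*}[h]$ for each $h\in\{f,g,f+g\}$ simultaneously, which gives $F(\O^*)=F(\mu^*)$.

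I expect this last step to be genuinely delicate, precisely because the non-monotonicity bites here: the three comparison inequalities $\E_h(\O^*)\le\E_h(\mu^*)$ (obtained by using $u_{\mu^*}[h]$ as a test function on $\O^*$) propagate with opposing signs through the decomposition, so no termwise Buttazzo--Dal Maso reduction is available. Extracting the correct cancellation will require a joint first-variation argument for the full cost $F$ together with the identification of the non-trivial finite part of $\mu^*$ on $\O^*$ through the resulting stationarity condition; this is where I expect the bulk of the technical work to sit.
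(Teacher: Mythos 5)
Your overall architecture coincides with the paper's: relax to capacitary measures, use $\gamma$-compactness and $\gamma$-continuity of the cost to produce a relaxed minimizer $\mu^*$, then perturb $\mu^*$ multiplicatively (your $\mu_t^E=(1-t\ind_E)\mu^*$ is the paper's $(1+\eps\phi)\mu$ with $\phi=-\ind_E$; note you also need the increasing perturbation $(1+t\ind_E)\mu^*$, equally admissible, to get an equality rather than a one-sided inequality). The polarization identity $\int_D g\,u_\O\,dx=\E_f(\O)+\E_g(\O)-\E_{f+g}(\O)$ is correct but turns out to be unnecessary. The genuine gap is the target you assign to the first-variation step. Differentiating $F$ along $(1\mp t\ind_E)\mu^*$ yields $\pm\int_E R_{\mu^*}(f)\,R_{\mu^*}(g)\,d\mu^*\ge0$ for every Borel $E$, i.e.\ only the vanishing of the \emph{product} $R_{\mu^*}(f)\,R_{\mu^*}(g)=0$ $\mu^*$-a.e. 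It does not yield $u_{\mu^*}[h]=u_{\O^*}[h]$ for $h\in\{f,g,f+g\}$ with $\O^*=\{w_{\mu^*}>0\}$, and that identity is false in general: $\mu^*$ may carry a nontrivial finite part on the region where, say, $R_{\mu^*}(g)=0$ but $R_{\mu^*}(f)\ne0$, and there the absorption term $\mu^* u$ genuinely alters $u_{\mu^*}[f]$. For the same reason your candidate domain $\O^*=\O_{\mu^*}$ need not realize the relaxed minimum; the correct candidate is the smaller set $\{R_{\mu^*}(f)\,R_{\mu^*}(g)\ne0\}$.

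The missing idea is how to exploit the product condition. The paper's move is a saturation argument: since $u=R_{\mu^*}(f)$ already vanishes quasi everywhere on $\{u=0\}$, raising $\mu^*$ to $+\infty$ on that set leaves $u$, hence the cost $\int_D g\,u\,dx$, unchanged; dually, writing the cost as $\int_D R_{\mu^*}(g)\,f\,dx$ via self-adjointness of the resolvent, one may also set $\mu^*=+\infty$ on $\{R_{\mu^*}(g)=0\}$ without changing the cost. After these two modifications the set of finiteness becomes $\{R_{\mu^*}(f)\,R_{\mu^*}(g)\ne0\}$ (still of measure at most $1$), and on this set the optimality condition forces the measure to vanish; hence the modified measure takes only the values $0$ and $+\infty$ and is the indicator measure of a quasi-open domain with the same cost. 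Replacing your final step by this argument closes the gap; the relaxation and first-variation parts of your plan are sound.
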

Moreover we prove that 
\begin{itemize}
\item if $g\ge0$ we have either $|\O_{opt}|=1$ or $|\O_{opt}|<1$ and $\{f<0\}\subset\O_{opt}$ (Theorem \ref{p:contain}); similarly, if $f\ge0$ we have either $|\O_{opt}|=1$ or $|\O_{opt}|<1$ and $\{g<0\}\subset\O_{opt}$;
\item if $\O_{opt}$ is smooth, the state functions $u$ and $v$ on $\O_{opt}$, corresponding to the solutions of the PDE \eqref{pde} with right-hand side $f$ and $g$ respectively, satisfy 
$$\frac{\partial u}{\partial n}\frac{\partial v}{\partial n}=const\quad\text{on}\quad \partial\O_{opt}\cap D,$$
the constant being zero if $|\O_{opt}|<1$ (Subsection \ref{sub:optcond});
\item if $|\O_{opt}|<1$ and $f\ge 0$, then the function $v_\O$, corresponding to the function $g$, is a solution of an obstacle problem (Proposition \ref{p:ostacolo}) and thus, under some appropriate assumptions on the regularity of $g$, the optimal set $\O_{opt}$ is open and its boundary is smooth (Corollary \ref{cor:ostacolo});
\item if $D=\R^d$ and $f,g$ are radially symmetric functions, $f$ radially decreasing and $g$ radially increasing, then the optimal set $\O_{opt}$ is a ball centered in zero (Proposition \ref{radprop}).
\end{itemize}

%%%%%%%%%%%%%%%%%%%%%%%%%%%%%%
\subsection{A stochastic optimal control problem}\label{sstoch}

A probabilistic counterpart of the optimization problem \eqref{minpb} is given by the case when the function $g$ appearing in the cost functional \eqref{minpb} is completely known, while the right-hand side $f$ in \eqref{pde} has the form $f=f_0+h$, where $f_0$ is given and $h$ is some random perturbation. The purpose of such a model is to obtain shapes corresponding to mechanical structures that are robust and reliable even if the data are not completely known. Several models involving uncertainties has been already studied; from the numerical point of view we refer for instance to \cite{all15} and the references therein, while in most of the cases there are no available theoretical results, even in some simplified situations.

An interesting result in this spirit is concerned with the existence of optimal domains for the worst-case functional 
$$\min_{\O\in\A}\,\sup_{h\in L^p}\int_D g(x)R_\O(f_0+h)\,dx$$
and was proved in \cite{bebuve} under the assumptions that $g\le 0$, $f_0>0$, and the perturbation $h$ is small. Here $R_\O$ denotes the resolvent operator which associates to every $f\in L^2(D)$ the solution $u_\O$ of \eqref{pde}.

Another situation of practical interest is when the perturbation $h$ belongs to some probability space and the cost functional is given by the average over all possible choices of $h$. The existence of minimizer in this situation can be deduced from Theorem \ref{t:existence} without any smallness assumption on the incertainty $h$.

More precisely, given a probability $P$ on $L^2(D)$, our goal is to minimize the averaged cost
\be\label{avecost}
F(\O)=\int\left(\int_D g(x)R_\O(f)\,dx\right)\,dP(f)
\ee
over the admissible class $\A$ given by \eqref{admclass}. We assume that the barycenter $B_P$ of $P$, given by
$$B_P=\int f\,dP(f)$$
belongs to $L^2(D)$. We notice that $B_P$ is well defined when $P$ is such that
$$\int\|f\|_{L^2}\,dP(f)<+\infty\;.$$
Thus, using the fact that the resolvent operator $R_\O$ is self-adjoint, the cost functional in \eqref{avecost} can be written as
$$F(\O)=\int\left(\int_D R_\O(g)f\,dx\right)\,dP(f)=\int_D R_\O(g)B_P(x)\,dx=\int_D g(x)R_\O(B_P)\,dx\;,$$
and we are then in the framework of the existence Theorem \ref{t:existence}.

%%%%%%%%%%%%%%%%%%%%%%%%%%%%%%
\subsection{Organization of the paper}

In Section \ref{sexist} we prove the existence of an optimal domain $\O_{opt}$ (Theorem \ref{t:existence}). The study of the regularity properties of the optimal domains is an interesting and difficult issue; in Subsection \ref{sub:optcond} we compute the so called {\it shape derivative} assuming that $\O_{opt}$ is regular enough. Obtaining the regularity of a general solution $\O_{opt}$ from its minimality would be a very interesting result.

In Section \ref{s:nonsat} we study the minimizers for which the constraint $|\O_{opt}|\le1$ is not saturated. Note that this is a rather general situation, since no monotonicity of the shape cost function is assumed. Nevertheless, in several cases ($f\ge0$ and $|\{g<0\}|\ge1$) we may still obtain that the optimal domain verifies $|\O_{opt}|=1$ as we see in Theorem \ref{p:contain}. In Section \ref{sub:ostacolo} we show that $\O_{opt}$ is a solution of an obstacle problem and as a consequence we obtain that it has a regular free boundary in the sense of Corollary \ref{cor:ostacolo}. 

Finally, in Section \ref{s:radial} we study the case of radially symmetric functions $f$ and $g$. It is natural to expect that under this assumption the optimal domains are balls centered at zero. Also in this case the lack of monotonicity of the functional represents a difficult issue since the energy does not necessarily decrease under symmetrization. Nevertheless, we are able to prove that for every $\O$ there is a ball $B$ (not necessarily of the same measure as $\O$) having a smaller energy. We also provide an example of an optimal set $\O_{opt}$ of measure strictly smaller than one.

%%%%%%%%%%%%%%%%%%%%%%%%%%%%%%%%%%%%%%%%%%%%%%%%%%%%%%%%%%%%
\section{Sobolev spaces, quasi-open sets and capacitary measures}

In this section we briefly recall several notions related to capacity theory, quasi open sets, and capacitary measures; we refer to the book \cite{bubu05} for more details concerning these notions.

%%%%%%%%%%%%%%%%%%%%%%%%%%%%%%
\subsection{Sobolev functions and their representatives}

The Sobolev space $H^1(\R^d)$ is the closure of $C^\infty_c(\R^d)$ with respect to the norm 
$$\|u\|_{H^1}=\left(\int_{\R^d}|\nabla u|^2\,dx+\int_{\R^d}u^2\,dx\right)^{1/2}.$$
For every function $u\in H^1(\R^d)$ there is a set $E_u\subset\R^d$ such that: 
\begin{itemize}
\item every point in $\R^d\setminus E_u$ is a Lebesgue point for $u$, that is 
$$\ds u(x_0)=\lim_{r\to 0}\frac{1}{|B_r|}\int_{B_r(x_0)}u(x)\,dx\quad\text{for every}\quad x_0\in\R^d\setminus E_u;$$
\item $E_u$ has capacity zero, that is $\cp(E_u)=0$, where for a set $E\subset\R^d$, $\cp(E)$ is defined as
$$\cp(E):=\inf\Big\{\|\phi\|_{H^1}^2\ :\ \phi\in H^1(\R^d),\ \phi=1\text{ in a neighborhood of }E\Big\}.$$
\end{itemize}
We notice that a Sobolev function $u$ is defined up to a set of zero capacity, that is $u_1\sim u_2$ if and only if $\cp(\{u_1\neq u_2\})=0$.

%%%%%%%%%%%%%%%%%%%%%%%%%%%%%%
\subsection{Quasi-open sets and the space $H^1_0(\O)$}

We say that a set $\O\subset\R^d$ is quasi open if it is of the form $\O=\{u>0\}$ for some $u\in H^1(\R^d)$. We notice that all the open sets are quasi-open. Given a quasi-open set $\O\subset\R^d$ we define the Sobolev space
$$H^1_0(\O):=\Big\{u\in H^1(\R^d)\ :\ \cp(\{u\ne0\}\setminus\O)=0\Big\}.$$
We notice that $H^1_0(\O)$ is a closed subspace of $H^1(\R^d)$. In fact, if $u_n\to u$ in $H^1(\R^d)$, then up to a subsequence $u_n\to u$ pointwise outside of a set of zero capacity. If $\O$ is open then $H^1_0(\O)$ coincides with the usual Sobolev space defined as the closure of $C^\infty_c(\O)$ with respect to the $H^1$ norm. Let $\O\subset\R^d$ be a quasi-open set of finite measure and let $f\in L^2(\O)$. We say that a function $u\in H^1_0(\O)$ is a solution of the equation
$$-\Delta u=f\quad\hbox{in }\O,\qquad u\in H^1_0(\O),$$
if we have
$$\int_\O\nabla u\nabla\varphi\,dx=\int_\O f\varphi\,dx\qquad\forall\varphi\in H^1_0(\O).$$

%%%%%%%%%%%%%%%%%%%%%%%%%%%%%%
\subsection{Capacitary measures}

We say that a nonnegative Borel measure $\mu$ is {\it capacitary} if for every set $E\subset\R^d$ with $\cp(E)=0$, we have $\mu(E)=0$. We denote by $\M_{\cp}(\R^d)$ the class of capacitary measures on $\R^d$. In particular, if two functions $u_1$ and $u_2$ are in the same equivalence class of $H^1(\R^d)$, and $\mu$ is a capacitary measure, then $u_1$ and $u_2$ are in the same equivalence class of $L^2(\mu)$. For a quasi-open set $\O\subset\R^d$ and for a measure $\mu\in\M_{\cp}(\R^d)$ we define the space
$$H^1_\mu(\O)=H^1_0(\O)\cap L^2(\mu)=\Big\{u\in H^1_0(\O)\ :\ \int_{\R^d}u^2\,d\mu<\infty\Big\}.$$

\noindent For a given function $f\in L^2(\O)$ we say that $u\in H^1_\mu(\O)$ is a solution of the equation
$$-\Delta u+\mu u=f\quad\text{in }\O,\qquad u\in H^1_\mu(\O),$$
if we have
$$\int_\O\nabla u\nabla\varphi\,dx+\int_{\R^d}u\varphi\,d\mu=\int_\O f\varphi\,dx\qquad\forall\varphi\in H^1_\mu(\O).$$

\noindent Let $\mu$ be a capacitary measure in $\R^d$. The set of finiteness $\O_\mu$ of $\mu$ is defined as
$$\O_\mu=\bigcup_{u\in H^1_\mu(\R^d)}\{u\ne0\}.$$
We notice that the set $\O_\mu$ is a quasi-open set due to the fact that $H^1_\mu$ is separable. Moreover, if $\mu=0$ on $\O_\mu$, then $H^1_\mu(\R^d)=H^1_0(\O_\mu)$.

%%%%%%%%%%%%%%%%%%%%%%%%%%%%%%
\subsection{Convergence of capacitary measures}

Consider a bounded open set $D\subset\R^d$ and the family of capacitary measures
$$\M_{\cp}(D)=\Big\{\mu\in\M_{\cp}(\R^d)\ :\ \O_\mu\subset D\Big\}.$$
For every capacitary measure $\mu\in\M_{\cp}(D)$ we consider the torsion function $w_\mu$, solution of the equation
$$-\Delta w_\mu+\mu w_\mu=1\quad\text{in }D,\qquad w_\mu\in H^1_\mu(D).$$
We notice that $w_\mu$ uniquely determines the measure $\mu$. In fact, we have
$$\O_\mu=\{w_\mu>0\}\qquad\text{and}\qquad\mu=\frac{\Delta w_\mu+1}{w_\mu}\quad\text{on }\O_\mu.$$
The set $\M_{\cp}(D)$, endowed with the distance
$$d_\gamma(\mu_1,\mu_2)=\|w_{\mu_1}-w_{\mu_2}\|_{L^2}\;,$$
is a compact metric space (see for instance \cite{dmmo87}). Moreover, the family of capacitary measures $I_\O$ associated to smooth domains $\O\subset D$ is dense in $\M_{\cp}(D)$, where the measure $I_\O$ is defined by
$$I_\O(E)=\begin{cases}
0&\text{if }\cp(E\setminus\O)=0,\\
+\infty&\text{otherwise}.
\end{cases}$$

%%%%%%%%%%%%%%%%%%%%%%%%%%%%%%%%%%%%%%%%%%%%%%%%%%%%%%%%%%%%
\section{Existence of optimal shapes}\label{sexist}

In this section we prove the existence Theorem \ref{t:existence}. We first relax the problem to the class of capacitary measures $\M_{\cp}(D)$ that represents the closure of the admissible class $\A$ with respect to the $\gamma$-convergence. The relaxed problem is written again as an optimal control problem, with admissible class given by
$$\M=\big\{\mu\in\M_{\cp}(D),\ |\O_\mu|\le1\big\},$$
being $\O_\mu$ the {\it set of finiteness} of $\mu$. For every admissible $\mu\in\M$ we consider the state equation
\be\label{pdemu}
-\Delta u+\mu u=f\quad\hbox{in}\quad D,\qquad u\in H^1_0(D)\cap L^2(\mu),
\ee
and we indicate its unique solution by $u_\mu$. The relaxed optimization problem related to \eqref{minpb} can be then stated as
\be\label{relpb}
\min\left\{\int_D g(x)u_\mu\,dx\ :\ \mu\in\M\right\}.
\ee
It is convenient to introduce the {\it resolvent operator} $R_\mu:L^2(D)\to L^2(D)$ which associates to every $f\in L^2(D)$ the solution $u_\mu$ of \eqref{pdemu}. Thanks to the fact that $R_\mu$ is self-adjoint we can write the cost function as
$$\int_D g(x)\, R_\mu(f)\,dx=\int_D R_\mu(g)\,f(x)\,dx.$$

\begin{proof}[Proof of Theorem \ref{t:existence}]
It is well known that the relaxed admissible class $\M$ is compact with respect to $\gamma$-convergence and that the cost function is $\gamma$-continuous (see for instance \cite{bubu05}); therefore an optimal relaxed solution $\mu$ to problem \eqref{relpb} exists.

For every bounded continuous function $\phi$ and for every $\eps>0$ small enough we consider the capacitary measure $\mu_\eps=(1+\eps\phi)\mu$; since $\phi$ is bounded and $\eps$ is small we have that $\mu_\eps\in\M$ and $\O_{\mu_\eps}=\O_\mu$. Moreover, the spaces $H^1_{\mu_\eps}$ and $H^1_\mu$ coincide. Let us denote by $u_\eps$ the solution of the PDE
$$-\Delta u_\eps+\mu_\eps u_\eps=f\quad\hbox{in }D,\qquad u_\eps\in H^1_{\mu_\eps}$$
and by $u$ the solution of
$$-\Delta u+\mu u=f\quad\hbox{in }D,\qquad u\in H^1_\mu.$$
By the minimality of $\mu$ we have
$$\int_D gu_\eps\,dx\ge\int_D gu\,dx,$$
which gives
\be\label{weps}
\int_Dg\frac{u_\eps-u}{\eps}\,dx\ge0.
\ee
Denoting by $w_\eps$ the function $(u_\eps-u)/\eps$ we have that $w_\eps$ satisfies the PDE
$$-\Delta w_\eps+\mu w_\eps=-\phi u_\eps\mu\quad\hbox{in }D,\qquad w_\eps\in H^1_\mu.$$
Since $\mu_\eps$ $\gamma$-converges to $\mu$ we have that $u_\eps\to u$ weakly in $H^1_\mu$; hence $w_\eps\to w$ weakly in $H^1_\mu$, where $w$ is the solution of the PDE
$$-\Delta w+\mu w=-\phi u\mu\quad\hbox{in }D,\qquad w\in H^1_\mu.$$
Passing to the limit in \eqref{weps} as $\eps\to0$ gives
$$0\le\int_D gw\,dx=\int_D gR_\mu(-\phi u\mu)\,dx=-\int_D R_\mu(g)\phi u\,d\mu.$$
Since $\phi$ is arbitrary, we obtain that
\be\label{prodzero}
R_\mu(g)u=0\qquad\mu\hbox{-a.e.}
\ee
Since $u=0$ where $\mu=+\infty$, by the form of the cost functional, without loss of generality we may assume that $\O_\mu=\{u\ne0\}$. Analogously, since the cost functional can also be written as $\int_D R_\mu(g)f\,dx$, we may assume that $\mu=+\infty$ on $R_\mu(g)=0$. Thus by \eqref{prodzero} the capacitary measure $\mu$ takes only values $0$ and $+\infty$ and hence it is a domain.
\end{proof}

%%%%%%%%%%%%%%%%%%%%%%%%%%%%%%
\subsection{Optimality condition on the boundary of the optimal sets}\label{sub:optcond}

We now formally deduce the optimality condition on the boundary of an optimal set $\O\subset D$ (for the rigorous proof we refer to \cite[Chapter 5]{hepi05}). We assume that $\O$ is sufficiently regular ($\partial\O\in C^{2,\alpha}$) and we set for simplicity $u=R_\O(f)$ and $v=R_\O(g)$. For a smooth vector field $V\in C^\infty_c(D;\R^d)$ we consider the perturbation $\O_t=(Id+tV)(\O)$ and the solutions $u_t=R_{\O_t}(f)$ and $v_t=R_{\O_t}(g)$. The formal derivatives
$$u'=\frac{d}{dt}\Big\vert_{t=0}u_t\qquad\hbox{and}\qquad v'=\frac{d}{dt}\Big\vert_{t=0}v_t$$
are solutions respectively of the problems:
$$\Delta u=0\quad\text{in }\O,\qquad u'+V\cdot\nabla u=0\quad\text{on }\partial\O;$$
$$\Delta v=0\quad\text{in }\O,\qquad v'+V\cdot\nabla v=0\quad\text{on }\partial\O.$$
Thus, the derivative of the cost functional is given by
\[\begin{split}
\frac{d}{dt}\Big\vert_{t=0}\int_{\O_t}u_t g\,dx
&=\int_\O u' g\,dx=\int_\O\nabla u'\nabla v\,dx-\int_{\partial\O}u'\frac{\partial v}{\partial n}\\
&=\int_{\partial\O}\frac{\partial v}{\partial n}V\cdot\nabla u=\int_{\partial\O}V\cdot n \frac{\partial u}{\partial n}\frac{\partial v}{\partial n}.
\end{split}\]
We now consider two cases: 
\begin{itemize}
\item If the volume constraint is saturated, that is $|\O|=1$, then we have to consider perturbations only with respect to divergence-free vector fields $V$. In this case we obtain 
$$\int_{\partial\O}V\cdot n\frac{\partial u}{\partial n}\frac{\partial v}{\partial n}=0\qquad\hbox{for every $V\in C^\infty_c(D;\R^d)$ such that }\dive V=0,$$
which gives the optimality condition 
$$\frac{\partial u}{\partial n}\frac{\partial v}{\partial n}=const\quad\text{on }\partial\O.$$
\item If the volume constraint is not saturated, that is $|\O|<1$, then we have 
$$\int_{\partial\O}V\cdot n\frac{\partial u}{\partial n}\frac{\partial v}{\partial n}=0\qquad\text{for every }V\in C^\infty_c(D;\R^d),$$
which gives the optimality condition 
$$\frac{\partial u}{\partial n}\frac{\partial v}{\partial n}=0\qquad\text{on }\partial\O.$$
In the case when $f\ge0$, we have that $|\nabla u|>0$ on the boundary of the optimal set $\O=\{u>0\}$. Thus the optimality condition can be written in the simplified form
$$\frac{\partial v}{\partial n}=0\qquad\hbox{on }\partial\O.$$
This situation is untypical for the shape optimization problem, where the cost functional is usually monotone with respect to the set inclusion. We give an explicit example of a case when the constraint is not saturated in Section \ref{s:radial}. In the next section we analyze this type of solutions and their connection with the obstacle problem. 
\end{itemize}
%In this section we obtain some necessary conditions that optimal domains $\O$ for problem \eqref{minpb} have to fulfill. 

%%%%%%%%%%%%%%%%%%%%%%%%%%%%%%%%%%%%%%%%%%%%%%%%%%%%%%%%%%%%
\section{Minimizers with nonsaturated constraint}\label{s:nonsat}

In this section we consider minimizers $\O$ which do not saturate the volume constraint, that is $|\O|<1$. We restrict our attention to the case $f\ge0$ on $D$, while the cost coefficient $g$ may change sign. Equivalently, since the resolvent operators are self-adjoint, we may consider $g\ge0$ and $f$ changing sign. In Subsection \ref{sub:contain} we prove that an optimal set $\O$ necessarily contains the set $\{g<0\}$. In Subsection \ref{sub:ostacolo} we establish a relation of the minimizer $\O$ with the obstacle problem.

%%%%%%%%%%%%%%%%%%%%%%%%%%%%%%
\subsection{A necessary condition of optimality}\label{sub:contain}

The main result of this section is Theorem \ref{p:contain}. The argument is carried out from the point of view of the state function $u=R_\O(f)$ relative to a nonnegative right-hand side $f$. Before we pass to the statement and the proof of Theorem \ref{p:contain} we recall several classical results concerning the function $u$. 

\begin{rema}\label{oss:radon}
Let $f\in L^2(D)$ and let $u\in H^1_0(D)$ be a nonnegative function such that $\Delta u+f\ge0$ on $D$ in sense of distributions, that is 
$$\int_D\big(-\nabla u\nabla\varphi+f\varphi\big)\,dx\ge0\qquad\text{for every nonnegative }\varphi\in C^\infty_c(D).$$
It is well-known that $\nu:=\Delta u+f$ is a (positive) measure. Moreover, $\Delta u+f$ is a Radon measure in $D$. In fact, if $B_r(x_0)\subset D$, there is a nonnegative function $\varphi\in C^\infty_c(D)$ such that $\varphi=1$ on $B_r(x_0)$; thus
$$(\Delta u+f)\big(B_r(x_0)\big)\le\int_D\varphi\,d\nu=\int_D\big(-\nabla u\nabla\varphi+f\varphi\big)\,dx<+\infty.$$
\end{rema}

In what follows we use an important characterization of $\Delta u+f$ to construct competitors for the solution of the problem \eqref{relpb}. For the proof we refer to \cite{dmmu97} (Theorem 5.1).

\begin{lemma}\label{p:competitor}
Let $f\in L^2(D)$ and let $u\in H^1_0(D)$ be a nonnegative function. Then the following conditions are equivalent:
\begin{enumerate}[(i)]
\item $\Delta u+f\ind_{\{u>0\}}\ge0$ on $D$ in the sense of distributions;
\item there exists a capacitary measure $\mu\in\M_{\cp}(D)$ such that $\O_\mu=\{u>0\}$ and
$$-\Delta u+\mu u=f\quad\text{on }D,\qquad u\in H^1_\mu.$$
\end{enumerate}
\end{lemma}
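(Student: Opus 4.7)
The two implications have rather different characters, and I would treat them separately. The direction $(ii)\Rightarrow(i)$ amounts to a test-function computation, while $(i)\Rightarrow(ii)$ is a construction whose delicate points are all at the level of capacity.

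For $(ii)\Rightarrow(i)$, the plan is to test the equation $-\Delta u+\mu u=f$ against the Lipschitz truncation $\varphi\chi_\delta(u)$, where $\varphi\in C^\infty_c(D)$ is nonnegative and $\chi_\delta(t):=\min(t/\delta,1)_+$. Since $\varphi\chi_\delta(u)$ vanishes quasi-everywhere on $\{u=0\}$ and is controlled in $L^2(\mu)$ by $\|\varphi\|_\infty\delta^{-1}\|u\|_{L^2(\mu)}$, it is an admissible test function. The chain rule then produces
\[
\int\chi_\delta(u)\nabla u\cdot\nabla\varphi\,dx+\int\varphi\chi_\delta'(u)|\nabla u|^2\,dx+\int u\varphi\chi_\delta(u)\,d\mu=\int f\varphi\chi_\delta(u)\,dx.
\]
The middle term and the $\mu$-term are both nonnegative and can be discarded. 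Sending $\delta\to0$, with $\chi_\delta(u)\to\ind_{\{u>0\}}$ boundedly and $\nabla u=0$ a.e.\ on $\{u=0\}$ by Stampacchia to justify passage to the limit, one obtains $\int\nabla u\cdot\nabla\varphi\,dx\le\int f\ind_{\{u>0\}}\varphi\,dx$ for every nonnegative $\varphi\in C^\infty_c(D)$, which is exactly condition $(i)$.

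For $(i)\Rightarrow(ii)$, Remark~\ref{oss:radon} identifies $\nu:=\Delta u+f\ind_{\{u>0\}}$ as a nonnegative Radon measure on $D$. The natural candidate measure is
\[
\mu:=u^{-1}\nu\ \text{on}\ \{u>0\},\qquad\mu\equiv+\infty\ \text{on}\ \{u=0\},
\]
with the convention that the infinite part forces every $H^1_\mu$ function to vanish quasi-everywhere on $\{u=0\}$. The PDE $-\Delta u+\mu u=f$ then reduces to a direct substitution: for $\varphi\in H^1_\mu$ one has $\int u\varphi\,d\mu=\int\varphi\,d\nu$ by the definition of $\mu$, while the integration by parts $\int\nabla u\cdot\nabla\varphi\,dx=-\int\varphi\,d\nu+\int f\ind_{\{u>0\}}\varphi\,dx$ follows from $\Delta u=\nu-f\ind_{\{u>0\}}$, and combining the two identities yields the weak form of the equation.

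The main obstacle is verifying that $\mu$ is a genuine capacitary measure with $\O_\mu=\{u>0\}$. This requires three nontrivial ingredients: (a) $\nu$ does not charge sets of capacity zero, so that $u^{-1}\nu$ is a meaningful Borel measure on $\{u>0\}$; (b) $\int_{\{u>0\}}u\,d\nu<+\infty$, so that $u\in H^1_\mu$ and $\O_\mu$ is not strictly smaller than $\{u>0\}$; and (c) $\nu$ has no singular part in the interior of $\{u=0\}$, which follows from $\nabla u=0$ a.e.\ there. The hardest is (a), which I would tackle by approximating the indicator of any polar set by $H^1$-small nonnegative functions and testing $\nu$ against them to force $\nu(N)=0$. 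This step is the technical heart of Theorem~5.1 of \cite{dmmu97} and is where I would expect the proof to require the most care.
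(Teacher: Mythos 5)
The paper does not actually prove this lemma: it defers entirely to Theorem~5.1 of \cite{dmmu97}, and your outline is essentially a correct reconstruction of that argument --- the truncation test function $\varphi\chi_\delta(u)$ for $(ii)\Rightarrow(i)$, and the measure $\mu=u^{-1}\nu$ on $\{u>0\}$, $\mu=+\infty$ on $\{u=0\}$, for the converse, are exactly the standard route. You also correctly locate the two real technical burdens, namely that the nonnegative measure $\nu=\Delta u+f\ind_{\{u>0\}}$, being of the form $H^{-1}+L^2$, does not charge sets of zero capacity, and that $\int_{\{u>0\}}u\,d\nu<+\infty$ (obtained by testing against truncations of $u$ itself), which is what gives $u\in L^2(\mu)$ and hence $\O_\mu=\{u>0\}$; these remain sketched rather than proved, but that matches the level of detail the paper itself supplies, and your point (c) is in fact superfluous since $\mu$ is declared $+\infty$ on $\{u=0\}$ regardless of what $\nu$ does there.
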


Let now $\O\subset\R^d$ be a bounded quasi-open set and let $u\in H^1_0(\O)$ be the solution of
\be\label{e:u_f}
-\Delta u=f\quad\text{in }\O,\qquad u\in H^1_0(\O).
\ee
The following lemma describes the behavior of $u$ around the boundary points of low density for $\O$. The result is classical and we give the proof for the sake of completeness.

\begin{lemma}\label{l:stima_u}
Let $r_0>0$, $x_0\in\R^d$ and $f\in L^2(B_{r_0}(x_0))$, with $f\ge0$. Suppose that
$$M:=\sup_{0<r\le r_0}\bigg(\frac1{|B_r|}\int_{B_r(x_0)}f^2(x)\,dx\bigg)^{1/2}<+\infty.$$
Then there exists a constant $\eps>0$, depending only on the dimension $d$ and on $M$, such that if $\O$ satisfies the hypothesis
$$\frac{|B_r(x_0)\cap\O|}{|B_r|}\le\eps\qquad\text{for every }0<r<r_0,$$
then for the solution $u$ of \eqref{e:u_f} we have the estimate
$$\frac1{r^2|B_r|}\int_{B_r(x_0)}|\nabla u|^2\,dx\le2^{d+2}\sup\bigg\{1,\frac1{r_0^2|B_{r_0}|}\int_{B_{r_0}(x_0)}|\nabla u|^2\,dx\bigg\}.$$
\end{lemma}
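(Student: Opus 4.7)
The plan is to prove a dyadic recursion for the scale invariant quantity $\phi(r) := \frac{1}{r^2|B_r|}\int_{B_r(x_0)}|\nabla u|^2\,dx$ of the form $\phi(r/2)\le \tfrac12\phi(r) + C(d,M)$, valid whenever the hypotheses are satisfied down to scale $r$, and then to iterate this inequality from $r_0$ down to an arbitrary $r\in(0,r_0]$.

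The engine is a Caccioppoli inequality combined with a Poincar\'e--Sobolev inequality that exploits the small density of $\Omega$. Fix $0<r\le r_0$ and pick a standard cutoff $\eta\in C^\infty_c(B_r(x_0))$ with $\eta\equiv1$ on $B_{r/2}(x_0)$ and $|\nabla\eta|\le 4/r$. Since $u\in H^1_0(\Omega)$ the test function $u\eta^2$ also belongs to $H^1_0(\Omega)$; plugging it into the weak formulation of $-\Delta u=f$ and absorbing the cross term yields the Caccioppoli bound
$$\int_{B_{r/2}(x_0)}|\nabla u|^2\,dx\le \frac{C(d)}{r^2}\int_{B_r(x_0)}u^2\,dx+\int_{B_r(x_0)}|f||u|\,dx.$$
Now I use that $u=0$ quasi everywhere on $B_r(x_0)\setminus\Omega$, so $E:=\{u\ne 0\}\cap B_r(x_0)$ has measure $\le\eps|B_r|$, which gives both: (i) by H\"older and the Sobolev--Poincar\'e inequality applied on $B_r(x_0)$ to a function vanishing on a set of density $\ge 1-\eps\ge 1/2$,
$$\int_{B_r(x_0)}u^2\,dx=\int_E u^2\,dx\le |E|^{2/d}\|u\|_{L^{2d/(d-2)}(B_r(x_0))}^2\le C(d)\,\eps^{2/d}\,r^2\int_{B_r(x_0)}|\nabla u|^2\,dx;$$
(ii) the bound $\|f\|_{L^2(B_r)}\le M|B_r|^{1/2}$ from the hypothesis. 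Combining these and applying Young's inequality to the term involving $f$ yields
$$\int_{B_{r/2}(x_0)}|\nabla u|^2\,dx\le C_1(d)\,\eps^{2/d}\int_{B_r(x_0)}|\nabla u|^2\,dx+C_2(d,M)\,r^{d+2}.$$

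Choosing $\eps$ small, depending only on $d$ and $M$, so that $C_1(d)\eps^{2/d}\le 2^{-(d+3)}$, and dividing by $(r/2)^2|B_{r/2}|=\omega_d 2^{-(d+2)}r^{d+2}$, the recursion becomes $\phi(r/2)\le \tfrac12\phi(r)+C_3(d,M)$. Iterating from $r_0$ down to $2^{-k}r_0$ gives
$$\phi(2^{-k}r_0)\le 2^{-k}\phi(r_0)+2C_3\le \phi(r_0)+2C_3,$$
and for an arbitrary $r\in(0,r_0]$ I pick $k$ with $2^{-(k+1)}r_0<r\le 2^{-k}r_0$ and control $\phi(r)$ by $\phi(2^{-(k+1)}r_0)$ losing at most a factor $2^{d+2}$ coming from the ratio of the volume normalizations on consecutive dyadic balls. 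Adjusting $C_3$ into the multiplicative constant produces the claimed inequality $\phi(r)\le 2^{d+2}\max\{1,\phi(r_0)\}$.

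The main obstacle is technical: squeezing the constants and the lower order $f$-term into the form $\tfrac12\phi(r)+C$ without losing the exponent $\eps^{2/d}$, which forces a careful use of Young's inequality with a parameter comparable to $\eps^{2/d}$. A minor caveat is dimension $d=2$, where the Sobolev embedding into $L^{2d/(d-2)}$ is not available and must be replaced by embedding into any $L^q$ with $q>2$; the exponent $2/d$ is then replaced by $2/q'$ and the argument proceeds identically, with the same smallness-of-$\eps$ mechanism.
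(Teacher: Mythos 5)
Your proof is correct and follows essentially the same route as the paper's: a Caccioppoli inequality with a cutoff on $B_r$, the Poincar\'e-type inequality $\int_{B_r}u^2\,dx\le C(d)\,\eps^{2/d}r^2\int_{B_r}|\nabla u|^2\,dx$ exploiting the small density of $\O$, and a dyadic iteration from $r_0$ down to $r$. The only cosmetic difference is that you apply Young's inequality to turn the $f$-term into an additive constant, obtaining $\phi(r/2)\le\tfrac12\phi(r)+C(d,M)$, whereas the paper keeps it as $a_{n+1}\le\tfrac12 a_n+\tfrac12 a_n^{1/2}$; both recursions give the bound $\sup\{1,\phi(r_0)\}$ at dyadic scales and the factor $2^{d+2}$ when passing to intermediate radii.
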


\begin{proof}
Suppose, without loss of generality, that $x_0=0$. Let $0<r<r_0$ and $\phi\in C^\infty_0(B_r)$ be a function such that $0\le\phi\le1$ on $B_r$, $\phi=1$ on $B_{r/2}$ and $|\nabla\phi|\le 3/r$. The proof is obtained by iteration of the following Caccioppoli inequality:
\[\begin{split}
\int_{B_{r/2}}|\nabla u|^2\,dx&\le\int_{B_r}|\nabla(\phi u)|^2\,dx=\int_{B_r}|\nabla\phi|^2 u^2\,dx+\int_{B_r}\nabla u\nabla(\phi^2 u)\,dx\\
&=\int_{B_r}|\nabla\phi|^2 u^2\,dx+\int_{B_r}f\phi^2 u\,dx\\
&\le\frac{9}{r^2}\int_{B_r}u^2\,dx+\bigg(\int_{B_r}f^2\,dx\bigg)^{1/2}\bigg(\int_{B_r} u^2\,dx\bigg)^{1/2}.
\end{split}\]
Now, since the ball is an extension domain, there are constants $\Lambda_1>0$ and $\delta_0>0$ such that if $|\O\cap B_r|\le\delta_0|B_r|$ and $v\in H^1(B_r)$ is such that $v=0$ on $B_r\setminus\O$, then 
$$\int_{B_r}v^2\,dx\le\Lambda_1 r^2\bigg(\frac{|\O\cap B_r|}{|B_r|}\bigg)^{2/d}\int_{B_r}|\nabla v|^2\,dx.$$
Thus, we obtain,
$$\int_{B_{r/2}}|\nabla u|^2\,dx\le 9\Lambda_1\eps^{2/d}\int_{B_{r}}|\nabla u|^2\,dx+\bigg(\int_{B_{r}} f^2\,dx\bigg)^{1/2}\bigg(\Lambda_1\eps^{2/d}r^2\int_{B_{r}} |\nabla u|^2\,dx\bigg)^{1/2}.$$
Dividing by $r^2|B_r|$ we get
\[\begin{split}
\frac1{r^2|B_r|}\int_{B_{r/2}}|\nabla u|^2\,dx&\le\frac{9\Lambda_1\eps^{2/d}}{r^2|B_r|}\int_{B_r}|\nabla u|^2\,dx+\bigg(\frac1{|B_r|}\int_{B_r}f^2\,dx\bigg)^{1/2}\bigg(\frac{\Lambda_1\eps^{2/d}}{r^2|B_r|}\int_{B_r}|\nabla u|^2\,dx\bigg)^{1/2}\\
&\le\frac{9\Lambda_1\eps^{2/d}}{r^2|B_r|}\int_{B_r}|\nabla u|^2\,dx+M\Lambda_1^{1/2}\eps^{1/d}\bigg(\frac{1}{r^2|B_r|}\int_{B_r}|\nabla u|^2\,dx\bigg)^{1/2}.
\end{split}\]
Let us indicate by $r_n$ and $a_n$ the quantities
$$r_n=r_0 2^{-n},\qquad a_n=\frac{1}{r_n^2|B_{r_n}|}\int_{B_{r_n}}|\nabla u|^2\,dx.$$
Then, for $\eps$ small enough we have
$$a_{n+1}\le\frac12 a_n+\frac12 a_n^{1/2},$$
which gives that $a_n\le\sup\{1,a_0\}$, for every $n\ge1$.
\end{proof}

%Let $0<r<r_0$ and $y\in B_r$. Notice that by Remark \ref{snecess} the function $\ds x\mapsto u(x)+\frac{M}{2d}|x-y|^2$ is subharmonic. Then we have 
%\begin{align*}
%u(y)&\le \frac1{|B_r|}\int_{B_r(y)}\left(u(x)+\frac{M}{2d}|x-y|^2\right)\,dx\le \frac1{|B_r|}\int_{B_r(y)}u(x)\,dx+\frac{M}{2d}r^2\\
%&\le \frac{2^d}{|B_{2r}|}\int_{B_{2r}}u(x)\,dx+\frac{M}{2d}r^2 \le 2^d\eps\|u\|_{L^\infty(B_{2r})}+\frac{M}{2d}\frac{(2r)^2}4.
%\end{align*}
%Since $y$ is arbitrary, setting $r_n=r_02^{-n}$ and $a_n=\|u\|_{L^\infty(B_{r_n})}$ we get the relation 
%$$a_{n+1}\le 2^d\eps a_n+\frac{M}{2d}\frac1{4^{n+1}}.$$
%$$a_{n+1}-\frac{Mr_0^2}{2d}\frac{n+1}{4^{n+1}}\le \frac{1}{4}\left( a_n-\frac{Mr_0^2}{2d}\frac{n}{4^{n}}\right),$$
%which gives 
%$$a_{n}\le \frac{a_0}{4^n}+\frac{Mr_0^2}{2d}\frac{n}{4^{n}}.$$

\begin{theo}\label{p:contain}
Let $f\ge0$, $f\in L^2(D)$ and $g\in L^2(D)$. Suppose that $\O\subset\R^d$ is a solution of the problem \eqref{minpb} such that $|\O|<1$. Then $\{fg<0\}\subset\O$.
\end{theo}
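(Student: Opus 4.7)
My plan is to argue by contradiction, using Lemma~\ref{p:competitor} to produce an admissible competitor whose cost is strictly smaller than that of $\O$. Suppose $|\{fg<0\}\setminus\O|>0$; since $f\ge 0$, this equals $|\{f>0,\,g<0\}\setminus\O|$. Fix $\delta>0$ with $|A_\delta|>0$ where $A_\delta:=\{f\ge\delta\}\cap\{g\le-\delta\}\setminus\O$, and pick $x_0\in A_\delta$ that is simultaneously a Lebesgue density-one point of $A_\delta$ and an $L^2$-Lebesgue point of $f$ and of $g$ with $f(x_0)\ge\delta$, $g(x_0)\le-\delta$; almost every $x_0\in A_\delta$ satisfies these conditions, and in particular $\O$ has Lebesgue density zero at $x_0$.

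For $r>0$ small set
\[\tilde u:=R_{B_r(x_0)}(f\ind_{B_r(x_0)\setminus\O})\in H^1_0(B_r(x_0)),\qquad u':=u_\O+\tilde u,\]
both of which are nonnegative. Applying Lemma~\ref{p:competitor} to $u_\O$ (which is $u_{I_\O}$) and to $\tilde u$ (which is $u_{I_{B_r(x_0)}}$ for right-hand side $f\ind_{B_r(x_0)\setminus\O}$) one obtains the nonnegative Radon measures
\[\sigma_1:=\Delta u_\O+f\ind_\O\ge0,\qquad\sigma_2:=\Delta\tilde u+f\ind_{B_r(x_0)\setminus\O}\ge0;\]
for $\sigma_2$ the strong maximum principle is needed to upgrade the factor $\ind_{\{\tilde u>0\}}$ appearing in Lemma~\ref{p:competitor}(i) to $1$ on $B_r(x_0)\setminus\O$, which is possible because $f\ge\delta$ on a positive-measure subset of $B_r(x_0)\setminus\O$ forces $\tilde u>0$ on the open ball. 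The same argument gives $\{u'>0\}\supseteq\O\cup B_r(x_0)$ up to a null set, and hence
\[\Delta u'+f\ind_{\{u'>0\}}\ge\sigma_1+\sigma_2\ge0\quad\text{in }D.\]
Lemma~\ref{p:competitor} then produces $\mu'\in\M_{\cp}(D)$ with $u'=u_{\mu'}$ and $\O_{\mu'}=\{u'>0\}\subseteq\O\cup B_r(x_0)$; since $|\O|<1$, for $r$ small enough $|\O_{\mu'}|\le1$ and so $\mu'\in\M$. The minimality of $\O$ then gives
\[0\le\int_Dgu_{\mu'}\,dx-\int_Dgu_\O\,dx=\int_Dg\tilde u\,dx.\]

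The contradiction will come from showing $\int_Dg\tilde u\,dx<0$ for $r$ small. Using self-adjointness of $R_{B_r(x_0)}$ and the decomposition $R_{B_r(x_0)}(g)=g(x_0)w_r+R_{B_r(x_0)}(g-g(x_0))$ with $w_r(x):=R_{B_r(x_0)}(1)=(r^2-|x-x_0|^2)/(2d)$,
\[\int_Dg\tilde u\,dx=\int_{B_r(x_0)\setminus\O}fR_{B_r(x_0)}(g)\,dx=f(x_0)g(x_0)\int_{B_r(x_0)\setminus\O}w_r\,dx+E(r).\]
The Lebesgue density zero of $\O$ at $x_0$ yields $\int_{B_r(x_0)\setminus\O}w_r\,dx=(1-o(1))C_dr^{d+2}$, so the principal term is at most $-\delta^2C_dr^{d+2}(1-o(1))$. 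The error $E(r)$ is controlled by the $L^2$-Lebesgue point bounds $\|f-f(x_0)\|_{L^2(B_r(x_0))},\,\|g-g(x_0)\|_{L^2(B_r(x_0))}=o(r^{d/2})$ and $\|f\|_{L^2(B_r(x_0))}=O(r^{d/2})$, combined with the resolvent estimate $\|R_{B_r(x_0)}(h)\|_{L^2}\le Cr^2\|h\|_{L^2(B_r(x_0))}$ and Cauchy--Schwarz, all producing $E(r)=o(r^{d+2})$. Hence $\int_Dg\tilde u\,dx<0$ for $r$ small, the desired contradiction. The most delicate step is the distributional inequality $\Delta u'+f\ind_{\{u'>0\}}\ge0$: one has to combine correctly the Radon measures supplied by Lemma~\ref{p:competitor} for $u_\O$ and $\tilde u$ and invoke the strong maximum principle twice to make the indicator functions match the ones of the regular parts $f\ind_\O+f\ind_{B_r(x_0)\setminus\O}=f\ind_{\O\cup B_r(x_0)}$.
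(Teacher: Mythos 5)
Your proof is correct, and although it follows the same overall strategy as the paper (argue by contradiction at a point $x_0$ of density zero for $\O$ which is a Lebesgue point of $f$ and $g$ with $f(x_0)>0>g(x_0)$, enlarge $\O$ by $B_r(x_0)$, certify the enlarged configuration via Lemma \ref{p:competitor}, and contradict optimality through an expansion of the cost increment at order $r^{d+2}$), your construction of the competitor state is genuinely different and simpler. The paper replaces $u$ inside $B_r(x_0)$ by the solution $v$ of $-\Delta v=f$ with boundary datum $u$, so the cost increment is $\int_{B_r}g(v-u)\,dx$ and one must show that the harmonic-replacement error $h-u$ is negligible; this is precisely where the paper needs the Caccioppoli-iteration Lemma \ref{l:stima_u} to bound $r^{-2}|B_r|^{-1}\int_{B_r}|\nabla u|^2$ and a blow-up/compactness argument to get $h_r-u_r\to0$. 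You instead add the explicit bump $\tilde u=R_{B_r(x_0)}(f\ind_{B_r(x_0)\setminus\O})$, so the increment is exactly $\int_D g\tilde u\,dx$; by self-adjointness and the explicit torsion function of the ball its leading term is $f(x_0)g(x_0)\int_{B_r\setminus\O}w_r\,dx\sim f(x_0)g(x_0)C_d r^{d+2}<0$, with errors handled by elementary $L^2$-Lebesgue-point estimates and the Poincar\'e bound $\|R_{B_r}\|_{L^2\to L^2}\le Cr^2$, so Lemma \ref{l:stima_u} is bypassed entirely. Two small points to tighten: in the inequality $\Delta u'+f\ind_{\{u'>0\}}\ge\sigma_1+\sigma_2$ the inclusion $\{u'>0\}\supseteq\O\cup B_r$ is not automatic (one only knows $\{u_\O>0\}\subseteq\O$), so either normalize $\O=\{u_\O>0\}$ first — harmless, as the cost depends only on $u_\O$, and the paper makes the same normalization — or take $\sigma_1=\Delta u_\O+f\ind_{\{u_\O>0\}}$ and use that $\{u_\O>0\}$ and $(B_r\setminus\O)\cap\{\tilde u>0\}$ are disjoint subsets of $\{u'>0\}$ together with $f\ge0$; and the competitor $\mu'$ produced by Lemma \ref{p:competitor} is a capacitary measure rather than a quasi-open set, so the comparison implicitly uses that a minimizer of \eqref{minpb} also minimizes the relaxed problem \eqref{relpb}, a fact the paper's own proof relies on as well.
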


\begin{proof}
Suppose by contradiction that this is not the case. Then there is a point $x_0\in D$ such that $x_0$ is a point of density $0$ for $\O$ and $x_0$ is a Lebesgue point for $f$ and $g$ with $f(x_0>0$ and $g(x_0)<0$, that is
\[\begin{split}
&\lim_{r\to0}\frac1{|B_r|}\int_{B_r(x_0)}f(x)\,dx=f(x_0)>0,\\
&\lim_{r\to0}\frac1{|B_r|}\int_{B_r(x_0)}g(x)\,dx=g(x_0)<0,\\
&\lim_{r\to0}\frac{|\O\cap B_r(x_0)|}{|B_r|}=0.
\end{split}\]
Let $r>0$ be fixed. Consider the functions $u,v$ solutions of the problems
\[\begin{split}
&-\Delta u=f\quad\text{in }\O,\qquad u\in H^1_0(\O),\\
&-\Delta v=f\quad\text{in }B_r(x_0),\qquad v=u\quad\text{on}\quad\partial B_r(x_0),
\end{split}\]
set $\nu=\Delta u+f\ind_{\{u>0\}}$ and take $r>0$ such that $\nu(\partial B_r(x_0))=0$. The function $v-u$ is a solution of the PDE
$$-\Delta (v-u)=\nu+f\ind_{B_r(x_0)\setminus\O}\quad\text{in }B_r(x_0),\qquad v-u\in H^1_0(B_r(x_0)),$$
in the sense that for all $\psi\in H^1_0(B_r(x_0))$ we have
$$\int_{B_r(x_0)}\nabla(v-u)\nabla\psi\,dx=\int_{B_r(x_0)\setminus\O}\psi f\,dx+\int_{B_r(x_0)}\psi\,d\nu\;.$$
In particular, by the maximum principle, we have that $v-u>0$ on $B_r(x_0)$.
We now show that
\be\label{e:v-u}
\Delta (v-u)+\nu\ind_{B_r(x_0)}+f\ind_{B_r(x_0)\setminus\O}\ge0\quad\text{in}\quad D,
\ee
in sense of distributions. Let $\phi\in C^\infty_c(D)$ be a nonnegative function. For every $\eps>0$, consider the function 
$$p_\eps(t)=\begin{cases}
1&\text{if }t\ge\eps,\\
0&\text{if }t\le0,\\
t/\eps&\text{if }0\le t\le\eps.
\end{cases}$$
Then $p_\eps(v-u)\phi\in H^1_0(B_r(x_0))$ and so we have
$$\int_{B_r(x_0)}\nabla(v-u)\nabla\big(p_\eps(v-u)\phi\big)\,dx=\int_{B_r(x_0)\setminus\O}p_\eps(v-u)\phi f\,dx+\int_{B_r(x_0)}p_\eps(v-u)\phi\,d\nu,$$
which, by developing the gradient, gives
$$\int_{B_r(x_0)}p_\eps(v-u)\nabla(v-u)\nabla\phi\,dx\le\int_{B_r(x_0)\setminus\O}p_\eps(v-u)\phi f\,dx+\int_{B_r(x_0)}p_\eps(v-u)\phi\,d\nu.$$
Passing to the limit as $\eps\to0$, we obtain
$$\int_{B_r(x_0)}\nabla(v-u)\nabla\phi\,dx\le\int_{B_r(x_0)\setminus\O}\phi f\,dx+\int_{B_r(x_0)}\phi\,d\nu,$$
which concludes the proof of \eqref{e:v-u}. Define now $\tilde u\in H^1_0(\Dr)$ by
$$\tilde u(x)=\begin{cases}
u(x)&\text{if }x\in\Dr\setminus B_r(x_0),\\
v(x)&\text{if }x\in B_r(x_0).
\end{cases}$$
We aim to show that $\Delta\tilde u+f\ind_{\{\tilde u>0\}}\ge0$ on $D$. In fact, using $\phi$ as a test function for $\Delta\tilde u+f\ind_{\{\tilde u>0\}}$ we have
\[\begin{split}
\int_D(-\nabla\tilde u\nabla\phi&+f\ind_{\{\tilde u>0\}}\phi)\,dx\\
&=\int_\O(-\nabla u\nabla\phi+f\phi)\,dx+\int_{B_r}\!\!\!\big(-\nabla (v-u)\nabla\phi+f\phi\ind_{B_r(x_0)\setminus\O}\big)\,dx\\
&\ge\int_D\phi\,d\nu-\int_{B_r(x_0)}\phi\,d\nu\ge0,
\end{split}\]
which proves the claim. Thus, by Lemma \ref{p:competitor}, we have that there is a capacitary measure $\tilde\mu\in\M_{\cp}(D)$ such that $\O_{\tilde\mu}=\{\tilde u>0\}=\O\cup B_r(x_0)$ and 
$$-\Delta\tilde u+\tilde\mu\tilde u=f\quad\text{on }D,\qquad\tilde u\in H^1_{\tilde\mu}(D).$$
Now, by the optimality of $\O$ we have that for $r>0$ sufficiently small
\be\label{e:inequality_optimality}
0\le\int_D g\tilde u\,dx-\int_D g u\,dx=\int_{B_r(x_0)} g(v-u)\,dx.
\ee
In order to conclude it is now sufficient to study the asymptotic behavior of the integral on the right-hand side as $r\to0$. Assume for simplicity that $x_0=0$. We consider the functions $w$ and $h$ solutions of the equations
\[\begin{split}
&-\Delta w=f\quad\text{in }B_r,\qquad w\in H^1_0(B_r),\\
&\Delta h=0\quad\text{in }B_r,\qquad h-u\in H^1_0(B_r),
\end{split}\]
and we set 
$$f_r(x)=f(rx),\quad g_r(x)=g(rx),\quad w_{r}(x)=\frac1{r^2}w(rx),\quad h_{r}(x)=\frac1{r^2}h(rx),\quad u_{r}(x)=\frac1{r^2}u(rx).$$
We notice that:
\begin{enumerate}[(i)] 
\item Since $x_0=0$ is a Lebesgue point for both $f$ and $g$, we have that $f_r\to f(0)$ and $g_r\to g(0)$ strongly in $L^2(B_1)$, as $r\to 0$.
\item The function $w_r$ is a solution of the equation 
$$-\Delta w_r=f_r\quad\text{in }B_1,\qquad w\in H^1_0(B_1),$$
and $w_r\to w_0$ strongly in $H^1_0(B_1)$, where $w_0(x)=f(0)(1-|x|^2)/(2d)$ is the solution of 
$$-\Delta w_0=f(0)\quad\text{in }B_1,\qquad w_0\in H^1_0(B_1).$$
\item There is a constant $C$, not depending on $r$, such that 
\be\label{e:u_rest}
\int_{B_1}|\nabla h_r|^2\,dx\le\int_{B_1}|\nabla u_r|^2\,dx\le C.
\ee
The first inequality is due to the harmonicity of $h_r$, while the second one is a consequence of Lemma \ref{l:stima_u}. Thus, $\|h_r-u_r\|_{H^1}^2\le C$ and so, up to a subsequence, we may assume that $z_r=h_r-u_r$ converges weakly in $H^1_0(B_1)$ and strongly in $L^2(B_1)$ to some function $z_0\in H^1_0(B_1)$. We now prove that $z_0=0$. In fact, given a function $\phi\in C^\infty_c(B_1)$ we have that
\[\begin{split}
\int_{B_1}\nabla\phi\nabla z_r\,dx
&=-\int_{B_1}\nabla\phi\nabla u_r\,dx\\
&\le\|\nabla\phi\|_{L^\infty}|\{u_r\ne0\}\cap B_1|^{1/2}\bigg(\int_{B_1}|\nabla u_r|^2\,dx\bigg)^{1/2}\\
&\le C\|\nabla\phi\|_{L^\infty}\left(\frac{|\O\cap B_r|}{|B_r|}\right)^{1/2},
\end{split}\]
where the equality is due to the fact that $h_r$ is harmonic, the first inequality is by Cauchy-Schwartz, and the last inequality is due to the estimate \eqref{e:u_rest}. Now since the density of $\O$ is zero in $0$, passing to the limit as $r\to 0$, we obtain
$$\int_{B_1}\nabla\phi\nabla z_0\,dx\le0.$$
Since $\phi$ is arbitrary we obtain that $z_0$ is harmonic in $B_1$ and since $z_0=0$ on $\partial B_1$ we get that $z_0=0$. Thus we conclude that
$$\lim_{r\to0}\int_{B_1}|h_r-u_r|^2\,dx=0.$$
\end{enumerate}
By the results from (i), (ii) and (iii), we get that
$$\int_{B_r}g(v-u)\,dx=r^{2-d}\int_{B_1}g_r(w_r+h_r-u_r)\,dx= r^{2-d}\bigg(\int_{B_1}g(x_0)w_0(x)\,dx+o(r)\bigg),$$
which is strictly negative, for $r>0$ sufficiently small, so contradicting \eqref{e:inequality_optimality}.
\end{proof}

\begin{rema}
Since the resolvent operator is self-adjoint, in Theorem \ref{p:contain} we may equivalently assume $g\ge0$ and deduce that if $|\O|<1$ then $\{gf<0\}\subset\O$. By a simple change of sign in the data we also have that if $f\le0$ (or if $g\le0$) and $|\O|<1$, then $\{gf<0\}\subset\O$.
\end{rema}

%%%%%%%%%%%%%%%%%%%%%%%%%%%%%%%%%%%%%%%%%%%%%%%%%%%%%%%%%%%%
\section{Unconstrained minimizers and the obstacle problem}\label{sub:ostacolo}

Let $D\subset\R^d$ be a bounded open set. We say that $\O\subset D$ is an unconstrained minimizer if it is a solution of the optimization problem 
\be\label{e:unconstrained}
\min\Big\{\int_\O R_\O(g) f(x)\,dx\ :\ \O\hbox{ quasi-open, }\O\subset D\Big\}
\ee
where we removed the measure constraint on $\O$. In Proposition \ref{p:ostacolo} we prove that the solution of \eqref{e:unconstrained} is related to the solution of the obstacle problem
\be\label{e:ostacolo}
\min\Big\{\frac12\int_{D}|\nabla v|^2\,dx+\int_D g(x)v(x)\,dx \ :\ v\in H^1_0(D),\ v\ge0\hbox{ on }D\Big\}. 
\ee
We first prove the following lemma characterizing the solutions of \eqref{e:ostacolo}. 
\begin{lemma}\label{l:ostacolo}
Let $D\subset\R^d$ be a bounded open set and $g\in L^2(D)$. Then the solution $v$ of the obstacle problem \eqref{e:ostacolo} satisfies 
\be\label{e:supv}
v=\sup_{\O\subset D} v_\O,
\ee
where the maximum is over all quasi-open subsets $\O\subset D$ and $v_\O$ is the solution of
\be\label{e:v_Omega}
\Delta v_\O=g\quad\hbox{in }\O,\qquad v_\O\in H^1_0(\O).
\ee
\end{lemma}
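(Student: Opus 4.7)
The plan is to prove the identity \eqref{e:supv} by first establishing the inequality $v_\O\le v$ for every quasi-open $\O\subset D$, and then exhibiting a distinguished quasi-open set $\O^\ast\subset D$ for which $v=v_{\O^\ast}$, so that the supremum is attained.

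The starting point is the variational inequality satisfied by the obstacle-problem minimiser: since $v+\eps\phi$ is admissible for every $\phi\in H^1_0(D)$ with $\phi\ge 0$ and every $\eps\ge 0$, the first-order condition reads
\begin{equation*}
\int_D\nabla v\cdot\nabla\phi\,dx+\int_D g\phi\,dx\ge 0\qquad\text{for every }\phi\in H^1_0(D),\ \phi\ge 0.
\end{equation*}
To obtain $v_\O\le v$, the natural choice is $\phi=(v_\O-v)_+$. This is nonnegative and, crucially, belongs to $H^1_0(\O)$: since $v\ge 0$ on $D$, the set $\{v_\O>v\}$ is contained in $\{v_\O>0\}\subset\O$, so one may legitimately test the weak formulation of $\Delta v_\O=g$ against it to get $\int_D\nabla v_\O\cdot\nabla\phi\,dx=-\int_D g\phi\,dx$. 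Subtracting this equality from the obstacle inequality cancels the integrals involving $g$ and leaves $\int_D|\nabla(v_\O-v)_+|^2\,dx\le 0$, forcing $(v_\O-v)_+\equiv 0$.

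For the attainment, I would choose $\O^\ast:=\{v>0\}$, which is quasi-open because $v\in H^1_0(D)$. On $\O^\ast$ one can perturb $v$ in both directions by any $\phi\in H^1_0(\O^\ast)$ while keeping $v+\eps\phi\ge 0$ for small $|\eps|$, so the first-order condition becomes an equality, yielding $\Delta v=g$ in $\O^\ast$ with $v\in H^1_0(\O^\ast)$; hence $v=v_{\O^\ast}$. Combined with the previous step, this proves \eqref{e:supv}.

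The only real technical point is the inclusion $(v_\O-v)_+\in H^1_0(\O)$, which rests entirely on the nonnegativity of the obstacle-problem solution; once that is in hand, everything else is a direct Stampacchia-type comparison argument between the obstacle variational inequality and the PDE for $v_\O$.
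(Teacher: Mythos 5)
Your first step (the inequality $v_\O\le v$ for every quasi-open $\O\subset D$) is correct and complete. It is the first-order version of the paper's argument: the paper compares the energies $J(v)+J(v_\O)$ and $J(v\vee v_\O)+J(v\wedge v_\O)$ and invokes uniqueness, while you subtract the weak formulation of \eqref{e:v_Omega}, tested with $(v_\O-v)_+$, from the variational inequality for $v$; for a quadratic functional these are the same computation, and your observation that $(v_\O-v)_+\in H^1_0(\O)$ because $v\ge0$ plays exactly the role of the paper's observation that $v\wedge v_\O\in H^1_0(\O)$. Either packaging is fine.

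The attainment step, however, contains a genuine gap. The claim that for every $\phi\in H^1_0(\{v>0\})$ one has $v+\eps\phi\ge0$ for all sufficiently small $|\eps|$ is false: $\phi$ need not be controlled by $v$ near $\partial\{v>0\}$. For instance, if $v$ vanishes quadratically at the free boundary (as solutions of the obstacle problem typically do) and $\phi$ vanishes only linearly, then $v-\eps\phi<0$ near $\partial\{v>0\}$ for every $\eps>0$. So the cone of admissible two-sided perturbations is strictly smaller than $H^1_0(\O^*)$, and you cannot deduce $\Delta v=g$ in $\O^*=\{v>0\}$ by this argument. The standard repair is the complementarity condition: your one-sided inequality says that $\mu:=-\Delta v+g$ is a nonnegative element of $H^{-1}(D)$, hence a Radon measure not charging sets of zero capacity; inserting the admissible competitors $w=0$ and $w=2v$ into the variational inequality gives $\int_D|\nabla v|^2\,dx+\int_D gv\,dx=0$, i.e.\ $\langle\mu,v\rangle=0$, and since $\mu\ge0$ and $v\ge0$ this forces $\mu$ to vanish on $\{v>0\}$; testing against $\varphi\in H^1_0(\{v>0\})$ (which vanishes quasi-everywhere outside $\{v>0\}$) then yields $\Delta v=g$ weakly in $\O^*$, hence $v=v_{\O^*}$. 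Note that the paper's own proof only establishes $v\ge v_\O$ and relegates the attainment to the remark following the lemma, so you are attempting to prove more than the displayed proof does; with the fix above, your argument is complete.
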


\begin{proof}
Suppose that $\O\subset D$ is a quasi-open set. It is sufficient to prove that $v\ge v_\O$ in $D$. Indeed, set
$$J(u)=\frac12\int_D|\nabla u|^2\,dx+\int_Du(x)g(x)\,dx$$
and consider the test functions $v\vee v_\O$ and $v\wedge v_\O$. Since $v\vee v_\O\le0$ in $D$ and $v\wedge v_\O\in H^1_0(\O)$, we have the inequalities
$$J(v)\le J(v\vee v_\O)\qquad\hbox{and}\qquad J(v_\O)\le J(v\wedge v_\O).$$
On the other hand, by the definition of $J$ we have
$$J(v)+J(v_\O)=J(v\vee v_\O)+J(v\wedge v_\O).$$
Thus, we obtain
$$J(v)= J(v\vee v_\O)\qquad\text{and}\qquad J(v_\O)= J(v\wedge v_\O).$$
By the uniqueness of the solution of the obstacle problem and of the equation \eqref{e:v_Omega}, we have that $v=v\vee v_\O$ and $v_\O=v\wedge v_\O$ which concludes the proof. 
\end{proof}

\begin{rema}
The supremum in \eqref{e:supv} is realized by the quasi-open set $\O=\{v>0\}$.
\end{rema}

\begin{rema}
By the density of the (smooth) open sets in the family of quasi-open sets we have that
$$v=\sup\big\{v_\O\ :\ \O\hbox{ open, }\O\subset D\big\}.$$
\end{rema}

\begin{prop}\label{p:ostacolo}
Let $D\subset\R^d$ be a bounded open set and let $f,g\in L^2(D)$ with $f\ge0$ on $D$. Then the unique minimizer of the unconstrained problem \eqref{e:unconstrained} is the quasi-open set $\O=\{v>0\}$, where $v$ is the solution of the obstacle problem \eqref{e:ostacolo}.
\end{prop}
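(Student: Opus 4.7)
The plan is to reduce the proposition to the pointwise comparison provided by Lemma \ref{l:ostacolo}. First I would rewrite the cost functional in terms of the function $v_\Omega$ appearing there: if $v_\Omega\in H^1_0(\Omega)$ is the unique solution of $\Delta v_\Omega=g$ in $\Omega$, then by linearity $v_\Omega=-R_\Omega(g)$, so extending $v_\Omega$ by zero outside $\Omega$ the cost in \eqref{e:unconstrained} becomes
\[
\int_\Omega R_\Omega(g)\,f\,dx=-\int_D f(x)\,v_\Omega(x)\,dx.
\]
Hence minimizing the cost over quasi-open $\Omega\subset D$ is the same as maximizing the functional $\Omega\mapsto\int_D f\,v_\Omega\,dx$ over the same class.

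Next I would invoke Lemma \ref{l:ostacolo} directly: it provides the pointwise bound $v_\Omega\le v$ on $D$ for every quasi-open $\Omega\subset D$, where $v$ is the solution of the obstacle problem \eqref{e:ostacolo}. Using the hypothesis $f\ge0$, this yields
\[
\int_D f\,v_\Omega\,dx\le\int_D f\,v\,dx.
\]
By the first remark following Lemma \ref{l:ostacolo}, the supremum is attained by the quasi-open set $\Omega^*=\{v>0\}$, for which $v_{\Omega^*}=v$. Thus $\Omega^*$ achieves the maximum and is a minimizer of \eqref{e:unconstrained}.

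For uniqueness, if $\Omega$ is any other minimizer, the equality case forces $f\,(v-v_\Omega)=0$ a.e.\ on $D$, and since $v-v_\Omega\ge0$ by Lemma \ref{l:ostacolo} we deduce $v=v_\Omega$ a.e.\ on $\{f>0\}$. I expect the main obstacle to be upgrading this pointwise identification to the quasi-everywhere set equality $\Omega=\{v>0\}$. One has to exploit that both $v$ and $v_\Omega$ solve $-\Delta u=-g$ on $\Omega\cap\{v>0\}$, so that their difference is harmonic and vanishes on a set of positive measure, and then combine the boundary condition $v_\Omega\in H^1_0(\Omega)$ with the free-boundary characterization of $\partial\{v>0\}$ coming from the obstacle problem to rule out either strict inclusion. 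Carrying this step out in the capacity/quasi-open framework, up to sets of zero capacity, is the only genuinely technical point of the proof.
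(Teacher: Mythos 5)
Your argument is essentially the paper's own proof: rewrite the cost as $-\int_D f\,v_\O\,dx$, invoke the pointwise comparison $v_\O\le v$ from Lemma \ref{l:ostacolo}, and conclude using $f\ge0$ together with the fact that $v_{\{v>0\}}=v$. The uniqueness step you sketch is admittedly left incomplete, but the paper's proof does not address uniqueness at all, so on the substance of what is actually proved you match it exactly.
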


\begin{proof}
Let $\O\subset D$ be a quasi-open set. By Lemma \ref{l:ostacolo} we have that $v\ge v_\O$. Since $f\ge0$ we have that
$$\int_D R_\O(g)f\,dx=-\int_D v_\O f\,dx\ge-\int_D vf\,dx=\int_D R_{\{v>0\}}(g)f\,dx,$$
which concludes the proof.
\end{proof}

As a corollary we obtain the following result.

\begin{cor}\label{cor:ostacolo}
Let $D\subset\R^d$ be a bounded open set and let $f,g\in L^2(D)$ with $f\ge0$ in $D$. Suppose that $\O\subset\Dr$ is a solution of the optimization problem \eqref{minpb} such that $|\O|<1$. Then:
\begin{enumerate}[(i)]
\item if $g\in L^p(D)$, for some $p>d$, then $\O$ is an open subset of $D$ and the function $v=R_\O(g)$ is $C^{1,\beta}$ regular in $D$, where $\beta=1-d/p$;
\item if the set $\{g>0\}$ is open and $g:\{g>0\}\to\R_+$ is H\"older continuous, then $v$ is $C^{1,1}$ regular in the set $\{g>0\}$ and $|\nabla v|=0$ on the free boundary $\partial\O\cap\{g>0\}$;
\item under the hypotheses from the previous point, the free boundary $\partial\O\cap\{g>0\}$ can be decomposed into two disjoint sets $\text{Reg}\,(\partial\O)$ and $\text{Sing}\,(\partial\O)$, where:
\begin{itemize}
\item $\text{Reg}\,(\partial\O)$ is an open subset of $\partial\O\cap\{g>0\}$ and is locally the graph of a $C^{1,\alpha}$ function, for some $\alpha>0$; if $g\in C^\infty(\{g>0\})$, then $\text{Reg}\,(\partial\O)$ is smooth;
\item $\text{Sing}\,(\partial\O)$ is contained in a countable union of $(d-1)$-dimensional manifolds.
\end{itemize} 
\end{enumerate}
\end{cor}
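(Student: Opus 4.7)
The plan is to reduce the three regularity statements to the classical regularity theory of the obstacle problem, through the characterization given by Proposition \ref{p:ostacolo}. The preliminary step I would carry out is to identify the optimal set: since $|\O|<1$, every quasi-open competitor obtained by a small enlargement of $\O$ is still admissible in \eqref{minpb}, so the volume constraint plays no role in the optimality conditions at $\O$. Combining this with the obstacle-problem characterization of the unconstrained minimizer in Proposition \ref{p:ostacolo}, I expect to conclude that $\O=\{\tilde v>0\}$, where $\tilde v$ solves the obstacle problem \eqref{e:ostacolo}; the function $v=R_\O(g)$ in the statement then coincides with $-\tilde v$ extended by zero outside $\O$. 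The hard part will be making this identification rigorous: a priori the global unconstrained minimizer $\{\tilde v>0\}$ could have measure larger than one and hence not be admissible, so I would have to rule this case out, for instance by comparing the cost of $\O$ with that of a suitable superlevel set $\{\tilde v>s\}$ of admissible measure and exploiting the strict inequality coming from $f\ge0$ together with $\tilde v\ge v_\O$ from Lemma \ref{l:ostacolo}.

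Once the identification $\O=\{\tilde v>0\}$ is in place, part (i) follows from the standard $W^{2,p}$ estimates for the obstacle problem: for $g\in L^p(D)$ with $p>d$, the solution $\tilde v$ of \eqref{e:ostacolo} belongs to $W^{2,p}_{\mathrm{loc}}(D)$, and Morrey's embedding gives $\tilde v\in C^{1,\beta}_{\mathrm{loc}}(D)$ with $\beta=1-d/p$. In particular $\tilde v$ is continuous, hence $\O=\{\tilde v>0\}$ is an open subset of $D$ and $v=-\tilde v$ inherits the same regularity. For part (ii) I would invoke Caffarelli's optimal $C^{1,1}$ regularity result for the obstacle problem with a H\"older continuous right-hand side: on the open set $\{g>0\}$ the obstacle problem is nondegenerate and one gets $\tilde v\in C^{1,1}_{\mathrm{loc}}(\{g>0\})$. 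The complementarity conditions $\tilde v=|\nabla\tilde v|=0$ on $\partial\{\tilde v>0\}$, which are standard once $\tilde v\in C^{1,1}$, then translate into $|\nabla v|=0$ on $\partial\O\cap\{g>0\}$.

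Part (iii) will follow from the classical structure theorem for the free boundary of the obstacle problem. Under the nondegeneracy provided by $g>0$, a blow-up analysis at each free boundary point classifies the possible blow-up limits into half-space profiles (whose locus is $\mathrm{Reg}(\partial\O)$) and homogeneous quadratic polynomial profiles (whose locus is $\mathrm{Sing}(\partial\O)$). By Caffarelli's free boundary regularity theorem the regular set is locally a $C^{1,\alpha}$ graph, with $\alpha$ depending on the H\"older exponent of $g$ and smooth when $g$ itself is smooth, while the singular set is covered by countably many $C^1$ hypersurfaces of dimension $d-1$ through the Caffarelli monotonicity formula combined with a Whitney-type extension argument. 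The main obstacle in the whole proof is the identification step described in the first paragraph: once $\O$ has been shown to coincide with $\{\tilde v>0\}$, all three items reduce to invoking the well-developed regularity theory for the obstacle problem.
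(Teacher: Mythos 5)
Your reduction of items \emph{(i)}--\emph{(iii)} to the classical regularity theory of the obstacle problem (Brezis--Stampacchia for the $W^{2,p}$/$C^{1,\beta}$ statement, Caffarelli for the $C^{1,1}$ bound and the regular part of the free boundary, Weiss/Caffarelli for the singular set) is exactly what the paper does, with the same references. The gap is in the step you yourself flag as the crux: the identification of $\O$ with a positivity set of an obstacle-problem solution. You propose a \emph{global} identification $\O=\{\tilde v>0\}$, where $\tilde v$ solves \eqref{e:ostacolo} on all of $D$, and you admit you must rule out $|\{\tilde v>0\}|>1$. Your suggested fix --- comparing the cost of $\O$ with that of a superlevel set $\{\tilde v>s\}$ and ``exploiting the strict inequality coming from $f\ge0$'' --- does not work as stated: with $f$ only nonnegative there is no strict inequality to exploit, $F(\{\tilde v>s\})$ for $s$ bounded away from $0$ need not be close to the unconstrained minimum, and even equality of costs would only identify $\O$ with $\{\tilde v>0\}$ via a uniqueness claim that itself degenerates when $f$ vanishes on a set of positive measure. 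As written, this case is not ruled out and the proof does not close.

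The paper avoids the global identification entirely by localizing. Since $|\O|<1$, for every sufficiently small ball $B_r(x_0)$ every quasi-open subset of $\tilde D=\big(\O\cup B_r(x_0)\big)\cap D$ has measure at most $|\O|+|B_r|<1$ and is therefore admissible for \eqref{minpb}; hence $\O$ is an \emph{unconstrained} minimizer of \eqref{e:unconstrained} with $D$ replaced by $\tilde D$. Proposition \ref{p:ostacolo}, applied in $\tilde D$, then shows that $R_\O(-g)$ solves the obstacle problem \eqref{e:ostacolo} in $\tilde D$, in particular in a neighborhood of $x_0$. Since both the obstacle-problem characterization ($v\ge0$, $\Delta v\le g$, $\Delta v=g$ on $\{v>0\}$) and all the regularity assertions in \emph{(i)}--\emph{(iii)} are local, letting $x_0$ range over $D$ gives the corollary; as a by-product, gluing these local variational inequalities is also the correct route to the global identity $\O=\{\tilde v>0\}$ that you were trying to establish by direct comparison. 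You should replace your first paragraph by this localization argument; the rest of your proof then goes through.
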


\begin{proof}
We first notice that since $\O$ is such that $|\O|<1$, it is an unconstrained minimizer of \eqref{e:unconstrained} in the set $\tilde D=\O\cup B_r(x_0)\cap D$, for every sufficiently small ball $B_r(x_0)$. By Proposition \ref{p:ostacolo}, the function $R_\O(-g)$ is a solution of the obstacle problem \eqref{e:ostacolo} in $\tilde D$. Thus, all the regularity result for the obstacle problem are valid for $v=R_\O(g)$, in particular the statements {\it(i)}, {\it(ii)} and {\it(iii)}. For the proof of {\it(i)} we refer to \cite{bresta}, while for {\it(ii)} and {\it(iii)}, we refer to \cite{caf}, \cite{kinnir} and \cite{weiss}.
\end{proof}

%%%%%%%%%%%%%%%%%%%%%%%%%%%%%%%%%%%%%%%%%%%%%%%%%%%%%%%%%%%%
\section{The case of radially symmetric cost functional}\label{s:radial}

In this section we consider a special class of functionals, where $f=1$ and $g:\R^d\to\R$ is radially symmetric and nondecreasing on each radius. It is natural to conjecture that in this situation the optimal set is a ball centered at the origin. In the case when $g\le0$ this follows by a classical symmetrization argument; on the other hand, if $g$ changes sign, the cost functional is nonmonotone and the known symmetrization results fail in the comparison argument of a general domain with a ball of the same measure. In this section we prove the following proposition.

\begin{prop}\label{radprop}
Suppose that $f=1$ and $g:\R^d\to\R$ is a given radially symmetric nondecreasing function such that $g(0)<0$. Then, setting
$$R_g=\sup\bigg\{R>0\ :\ \int_0^R r^{d-1}g(r)\,dr\le0\bigg\},$$
the ball centered at the origin of radius $\inf\{\omega_d^{-1/d},R_g\}$ is a solution of the problem
\be\label{radialopt}
\min\bigg\{\int_D g(x)u_\O(x)\,dx\ :\ \O\subset\R^d,\ \O\hbox{ quasi open, }|\O|\le1\bigg\}.
\ee
\end{prop}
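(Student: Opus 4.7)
The plan is to reduce every admissible set to a ball centered at the origin, via Schwarz symmetrization combined with Talenti's comparison, and then to minimize over the radius by calculus. Set $\Phi(R):=\int_D g\,u_{B_R}\,dx$; a radial computation gives
$$\Phi(R)=\frac{\omega_d}{2}\int_0^R g(r)r^{d-1}(R^2-r^2)\,dr,\qquad \Phi'(R)=\omega_d R\int_0^R g(r)r^{d-1}\,dr,$$
so by the definition of $R_g$, $\Phi$ is nonincreasing on $[0,R_g]$ and nondecreasing on $[R_g,+\infty)$; therefore the minimum of $\Phi$ under $R\le\omega_d^{-1/d}$ (i.e.\ $|B_R|\le 1$) is attained at $R^*:=\inf\{\omega_d^{-1/d},R_g\}$.

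If $R_g\le\omega_d^{-1/d}$, then $B_{R_g}\in\A$ and I would conclude from Proposition \ref{p:ostacolo}: taking $D$ large enough to contain everything in play, the unique unconstrained minimizer of $\O\mapsto\int_D g\,u_\O\,dx$ is $\{v>0\}$, with $v$ the obstacle-problem solution on $D$. A radial analysis of the ODE $(r^{d-1}v')'=r^{d-1}g$ with the matched boundary condition $v=v'=0$ at the free boundary identifies $\{v>0\}=B_{R_g}$, hence $\int g\,u_\O\,dx\ge\Phi(R_g)=\Phi(R^*)$ for every $\O\in\A$.

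The delicate case is $R_g>\omega_d^{-1/d}$: here $R^*=\omega_d^{-1/d}$ and Proposition \ref{p:ostacolo} only yields the strictly weaker lower bound $\Phi(R_g)$. For $\O\in\A$ set $r_\O:=(|\O|/\omega_d)^{1/d}\le\omega_d^{-1/d}<R_g$ and combine three steps. (i) Since $g$ is radially nondecreasing and $u_\O\ge 0$, the layer-cake identity $\int g\,u_\O\,dx=\int_0^\infty\int_{\{u_\O>t\}}g\,dx\,dt$ together with the elementary fact that $E\mapsto\int_E g$ is minimized, at fixed volume, by the ball centered at the origin, gives $\int g\,u_\O\,dx\ge\int g\,u_\O^*\,dx$ where $u_\O^*$ is the Schwarz symmetrization of $u_\O$. (ii) Talenti's proof for $f=1$ yields, beyond the pointwise bound $u_\O^*\le u_{B_{r_\O}}$, the derivative estimate $|(u_\O^*)'(r)|\le r/d$ a.e.\ on $(0,r_\O)$, so the nonnegative deficit $\phi:=u_{B_{r_\O}}-u_\O^*$ is radially nonincreasing with $\phi(r_\O)=0$. (iii) Setting $V(r):=\int_0^r g(s)s^{d-1}\,ds$, integration by parts in the radial variable gives
$$\int_D g\,\phi\,dx=d\omega_d\int_0^{r_\O}V(r)\bigl(-\phi'(r)\bigr)dr\le 0,$$
because $V\le 0$ on $[0,r_\O]\subset[0,R_g]$ by the definition of $R_g$ and $-\phi'\ge 0$ by (ii). Chaining gives $\int g\,u_\O\,dx\ge\int g\,u_\O^*\,dx\ge\Phi(r_\O)\ge\Phi(R^*)$, where the last inequality uses the monotonicity of $\Phi$ on $[0,R_g]$ together with $r_\O\le R^*$.

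The crux is the monotonicity of $\phi$ in step (ii). The familiar Talenti bound $u_\O^*\le u_{B_{r_\O}}$ only delivers $\phi\ge 0$, which is not enough: in the subregime where $g$ changes sign inside $B_{r_\O}$ (possible because $R_0<r_\O$ can occur), a naive pointwise comparison of $G(\rho(t))$ with $G(\sqrt{r_\O^2-2dt}_+)$ at each level $t$ fails. The sharper estimate $|(u_\O^*)'|\le r/d$, already built into Talenti's argument via the isoperimetric inequality plus co-area for $f=1$, is exactly what allows the sign change of $g$ to be absorbed into the one-signed quantity $V$ under the integration by parts.
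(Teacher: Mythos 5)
Your proof is correct, and its engine is the same as the paper's: reduce to a centered ball by the Hardy--Littlewood/Riesz rearrangement inequality and control $u_\O^*$ by Talenti's method, then minimize over the radius using the sign of $G(R)=\int_0^R r^{d-1}g\,dr$. In fact your derivative bound $|(u_\O^*)'(r)|\le r/d$ is exactly the content of the paper's Lemma \ref{l:subharm}: writing $|(u_\O^*)'(r)|=\frac{r}{d(1+a(r))}$ recovers the nonnegative coefficient $a$, and your integration by parts $\int_D g\phi\,dx=d\omega_d\int_0^{r_\O}V(r)(-\phi'(r))\,dr\le0$ is the paper's inequality $\F(a,R)\ge\F(0,R)$ for $R\le R_g$ written in integrated form. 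Where you genuinely diverge is the regime $R_g\le\omega_d^{-1/d}$. The paper never leaves the symmetrization framework: for $R>R_g$ it simply truncates, $\F(a,R)=\F(a,R_g)+\frac1d\int_{R_g}^R\frac{G(s)s}{1+a(s)}\,ds\ge\F(a,R_g)\ge\F(0,R_g)$, using $G\ge0$ beyond $R_g$ --- one line, no extra machinery. You instead invoke Proposition \ref{p:ostacolo} and identify the coincidence set of the radial obstacle problem as $B_{R_g}$. That route is heavier: it needs a bounded $D$ containing $\O$ (hence an exhaustion argument for unbounded admissible sets of measure at most one), the verification that the candidate $v(r)=-\int_r^{R_g}G(s)s^{1-d}\,ds$ satisfies the variational inequality globally (in particular $g\ge0$ outside $B_{R_g}$, which does follow from monotonicity of $g$ and $R_g\ge\sup\{g\le0\}$) and uniqueness. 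What it buys is the stronger conclusion that $B_{R_g}$ is optimal among \emph{all} subsets of $D$ with no volume constraint, which is precisely the nonsaturated phenomenon studied in Sections \ref{s:nonsat}--\ref{sub:ostacolo}. Two small points you should make explicit: the absolute continuity of $u_\O^*$ needed for the integration by parts (it follows from the same isoperimetric/coarea estimate, which also rules out flat parts of the distribution function), and the fact that $\{G\le0\}=[0,R_g]$ because $G'=r^{d-1}g$ changes sign at most once.
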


\begin{rema}
The condition $g(0)<0$ assures that the solution of \eqref{radialopt} is nontrivial. Indeed, if $g\ge 0$ on $\R^d$, then the empty set is a solution as well as every quasi-open subset of $\{g=0\}$.
\end{rema}

As a consequence of Proposition \ref{radprop} we obtain the following example.

\begin{exam}
Suppose that $f=1$ and $g=-\ind_{B_{r_0}}+\ind_{B_{r_0}^c}$ for some radius $r_0>0$. Then the solution $\O_{opt}$ of the problem \eqref{radialopt} is unique and is given by the ball of volume $\min\{2|B_{r_0}|,1\}$. Indeed, the solution is a ball $B_R$ that contains the set $B_{r_0}=\{g<0\}$. The energy of the ball $B_R$ is given by the formula
$$f(R)=d\omega_d\bigg[-\int_0^{r_0}\frac{R^2-r^2}{2d}r^{d-1}\,dr+\int_{r_0}^R\frac{R^2-r^2}{2d}r^{d-1}\,dr\bigg].$$
Taking the derivative with respect to $R$ we get that
$$f'(R)=\frac{\omega_d}{d}\Big[R(R^d-2r_0^d)\Big].$$
Thus, the function $f$ achieves its minimum at $2^{1/d}r_0$, if $2r_0^d\le\omega_d^{-1/d}$ and at $1$, if $2r_0^d\ge\omega_d^{-1/d}$, which gives the claim.
\end{exam}

The rest of the section is dedicated to the proof of Proposition \ref{radprop}.

%%%%%%%%%%%%%%%%%%%%%%%%%%%%%%
\subsection{The Schwarz rearrangement of a torsion function}

Let $\O\subset\R^d$ be a bounded open or quasi-open set and $u\in H^1_0(\O)$ be the torsion function of $\O$, that is the solution of the problem
\be\label{equ1}
-\Delta u=1\quad\hbox{in }\O,\qquad u\in H^1_0(\O).
\ee
Let $\O^*$ be the ball centered at zero of measure $|\O|$ and let $u^*:\O^*\to\R$ be the radially decreasing rearrangement of $u$. We set $M=\|u\|_{L^\infty(\O)}$ and $\O_t=\{u>t\}$, for every $t\in[0,M]$. Then the set $\O^*_t=\{u^*>t\}$ is the ball centered at zero of measure $|\O^*_t|=|\O_t|$. On every set $\O_t^*$ we consider the function $w_t$ solution of the PDE 
$$-\Delta w_t=1\quad\hbox{in }\O_t^*,\qquad w_t\in H^1_0(\O_t^*).$$
The well-known result of Talenti \cite{talenti} gives that
$$u^*(x)-t\le w_t(x)\qquad\text{for every $x\in\O^*_t$ and every }t\in[0,M].$$
In the next lemma we use this comparison to obtain that the function $u^*$ is itself a solution of a certain PDE on $\O^*$. 

\begin{lemma}\label{l:subharm}
Let $\O\subset\R^d$ be a bounded quasi-open set and let $u$ be the solution of \eqref{equ1}. Then the Steiner symmetrization $u^*$ of $u$ is a solution of the equation
\be\label{equstar}
-\dive\big((1+a(u^*))\nabla u^*\big)=1\quad\text{in }\O^*,\qquad u^*\in H^1_0(\O^*),
\ee
where $a:[0,M]\to \R$ is a nonnegative function. 
%$$\Delta u^*+1\ge 0\quad\text{on}\quad\R^d\quad\text{in distributional sense}.$$
\end{lemma}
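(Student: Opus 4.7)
The plan is to read the coefficient $a$ off the radial profile of $u^*$ and then verify the equation in its radial ODE form, using the Talenti-type comparison just quoted in the excerpt as the essential input.

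First, I would set up notation. Write $u^*(x)=U(|x|)$, where $U:[0,R]\to[0,M]$ is the radial profile of $u^*$ and $R=(|\O|/\omega_d)^{1/d}$ is the radius of $\O^*$. For every $t\in[0,M]$ let $\rho(t)$ be the radius of $\O^*_t=\{u^*>t\}$, so that $\omega_d\rho(t)^d=|\{u>t\}|$. Since $u$ is the torsion function of $\O$, the equation $-\Delta u=1$ excludes level sets of positive measure, so $t\mapsto|\{u>t\}|$ is continuous and strictly decreasing on $[0,M]$. Hence $\rho$ is a continuous, strictly decreasing bijection from $[0,M]$ onto $[0,R]$ and $U=\rho^{-1}$ is continuous and strictly decreasing. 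By Polya--Szeg\"o, $u^*\in H^1_0(\O^*)$, so $U$ is in particular differentiable a.e.

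Second, I would extract a pointwise derivative bound from Talenti. Both sides of $u^*(x)-t\le w_t(x)=(\rho(t)^2-|x|^2)/(2d)$ vanish on $\partial B_{\rho(t)}$ and are positive in the interior of $B_{\rho(t)}$. Dividing by $\rho(t)-|x|$ and letting $|x|\nearrow\rho(t)$ along a radius, at every $t$ for which $U$ is differentiable at $\rho(t)$ I obtain
\[
|U'(\rho(t))|\;\le\;\Big|\tfrac{d}{dr}\big((\rho(t)^2-r^2)/(2d)\big)\big|_{r=\rho(t)}\Big|\;=\;\frac{\rho(t)}{d}.
\]
Equivalently, $|U'(r)|\le r/d$ for a.e. $r\in(0,R)$.

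Third, I would define the coefficient and verify the equation. Since $U$ is strictly decreasing, for a.e. $r\in(0,R)$ the value $r$ is determined by $U(r)$, and I would set
\[
1+a(U(r))\;:=\;\frac{r}{d\,|U'(r)|},
\]
producing a function $a:[0,M]\to[0,\infty)$ with $a\ge0$ by the previous step. The radial form of the equation $-\dive\big((1+a(u^*))\nabla u^*\big)=1$ reads
\[
-\bigl(r^{d-1}(1+a(U(r)))\,U'(r)\bigr)'\;=\;r^{d-1}\qquad\text{on }(0,R),
\]
which, after integrating from $0$ to $r$ with vanishing flux at the origin by radial symmetry, is equivalent to $(1+a(U(r)))\,|U'(r)|=r/d$; this is exactly the definition of $a$. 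Together with $U(R)=0$ and $u^*\in H^1_0(\O^*)$ this yields the claim.

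I expect the main obstacle to be the rigorous justification of the one-sided differentiation of Talenti's inequality at $r=\rho(t)$ for a.e.\ $t$, together with the absolute continuity and strict monotonicity of the profile $U$ needed so that $a$ is well defined as a function of $u^*$ rather than merely of the radius. These are standard properties of the Schwarz rearrangement of a torsion function, but they must be handled with care in order to promote the identity from an a.e. statement on $(0,R)$ to a genuine distributional equation on $\O^*$.
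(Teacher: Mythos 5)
Your proof is correct in substance and, perhaps surprisingly, produces exactly the same coefficient as the paper: since $w_t(x)=(\rho(t)^2-|x|^2)/(2d)$ has constant normal derivative $\rho(t)/d$ on $\partial\O^*_t$, the paper's definition $1+a(t)=\int_{\partial\O^*_t}|\nabla w_t|\,d\HH^{d-1}\big/\int_{\partial\O^*_t}|\nabla u^*|\,d\HH^{d-1}$ reduces to your $\rho(t)/(d\,|U'(\rho(t))|)$. What differs is the verification route. The paper never differentiates the radial profile: it tests the equation against functions of the form $f(u^*)$, uses the coarea formula on both $\O$ and $\O^*$, and replaces the $w_t$-term by $\int_\O f(u)\,dx$ via the identity $\int_{\partial\O^*_t}|\nabla w_t|\,d\HH^{d-1}=|\O^*_t|=|\O_t|=\int_{\partial\O_t}|\nabla u|\,d\HH^{d-1}$, obtained by integrating the PDE over superlevel sets; Talenti's inequality enters only to guarantee $a\ge0$. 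You instead read the flux identity $(1+a(U(r)))|U'(r)|=r/d$ off the radial ODE and get $a\ge0$ by one-sided differentiation of Talenti's pointwise bound at $\partial\O^*_t$. Your route is more explicit and makes the nonnegativity of $a$ transparent (the paper leaves it implicit), but it leans harder on fine properties of the profile. The one point you should address more squarely is that $a$ takes \emph{finite} values, i.e.\ $|U'(r)|>0$ for a.e.\ $r$: your definition degenerates where $U'=0$, and the equation itself forces the flux $r^{d-1}(1+a(U))|U'|=r^d/d$ to be nonzero, so this is not optional. It does hold for the torsion function (one can rule out $|\{\nabla u=0\}\cap\{u>0\}|>0$ from $\Delta u=-1$ and then transfer non-degeneracy to $u^*$), and the paper is equally silent on it since its own $a$ carries $\int_{\partial\O^*_t}|\nabla u^*|\,d\HH^{d-1}$ in the denominator; still, in your write-up it deserves an explicit lemma alongside the continuity and strict monotonicity of $U$ that you already flag. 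The remaining upgrade from the a.e.\ flux identity to the distributional equation is unproblematic: the flux equals the smooth function $-r^d/d$ a.e., so integration by parts against radial test functions (and spherical averaging for general ones) closes the argument.
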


\begin{proof}
We use the notation introduced at the begining of the section. Let $f:[0,M]\to\R$ be a given $C^1$ function such that $f(0)=0$. Then $f(u^*)\in H^1_0(\O^*)$ and we have
\be\label{thndronthemntn}
\begin{split}
\int_{\O^*}\nabla u^*\nabla f(u^*)\,dx-\int_{\O^*}f(u^*)\,dx
&=\int_{\O^*}f'(u^*)|\nabla u^*|^2\,dx-\int_\O f(u)\,dx\\
&=\int_0^M f'(t)\int_{\partial\O_t^*}|\nabla u^*|\,d\HH^{d-1}\,dt-\int_\O f(u)\,dx\\
&=-\int_0^M f'(t)a(t)\int_{\partial\O_t^*}|\nabla u^*|\,d\HH^{d-1}\,dt\\
&\quad+\int_0^M f'(t)\int_{\partial\O_t^*}|\nabla w_t|\,d\HH^{d-1}\,dt-\int_{\O} f(u)\,dx,
\end{split}\ee
where we set 
$$a(t):=\frac{\ds\int_{\partial\O_t^*}|\nabla w_t|\,d\HH^{d-1}-\int_{\partial\O_t^*}|\nabla u^*|\,d\HH^{d-1}}{\ds\int_{\partial\O_t^*}|\nabla u^*|\,d\HH^{d-1}}.$$
We now notice that the difference of the last two terms in \eqref{thndronthemntn} vanishes. Indeed, using an integration by parts for $w_t$ we get 
$$\int_{\partial\O_t^*}|\nabla w_t|\,d\HH^{d-1}=-\int_{\partial\O_t^*}\frac{\partial w_t}{\partial n}\,d\HH^{d-1}=-\int_{\O_t^*}\Delta w_t\,dx=|\O_t^*|.$$
Analogously, since $u-t$ is the solution of $-\Delta (u-t)=1$ on $\O_t$ we get 
$$\int_{\partial\O_t}|\nabla u|\,d\HH^{d-1}=-\int_{\partial\O_t}\frac{\partial u}{\partial n}\,d\HH^{d-1}=-\int_{\O_t}\Delta u\,dx=|\O_t|.$$
Since $|\O_t|=|\O_t^*|$ we obtain 
\[\begin{split}
\int_0^M f'(t)\int_{\partial\O_t^*}|\nabla w_t|\,d\HH^{d-1}\,dt
&=\int_0^M f'(t)\int_{\partial\O_t}|\nabla u|\,d\HH^{d-1}\,dt\\
&=\int_{\O_t}f'(u)|\nabla u|^2\,dx=\int_\O f(u)\,dx.
\end{split}\]
On the other hand, by the co-area formula, the first term in the last line of \eqref{thndronthemntn} can be rewritten as
\[\begin{split}
\int_0^M f'(t)a(t)\int_{\partial\O_t^*}|\nabla u^*|\,d\HH^{d-1}\,dt
&=\int_{\O^*}f'(u^*)a(u^*)|\nabla u^*|^2\,dx\\
&=\int_{\O^*}a(u^*)\nabla u^*\nabla f(u^*)\,dx.
\end{split}\]
Thus, by \eqref{thndronthemntn} we infer
$$\int_{\O^*}(1+a(u^*))\nabla u^*\nabla f(u^*)\,dx=\int_{\O^*} f(u^*)\,dx.$$
%\quad\text{for every}\quad f:[0,M]\to\R,$$
Since the equality is true for every $f$, with $f(0)=0$, we obtain that $u^*$ is a solution of \eqref{equstar}.
%the equation
%$$-\text{div}\big((1+a(u^*))\nabla u^*\big)=1\quad\text{in}\quad\O^*,\qquad u^*\in H^1_0(\O^*).$$
%Now it is sufficient to notice that by \cite{talenti} $a(t)\ge 0$, for every $t\in[0,M]$. Thus, $u^*$ satisfies the inequality in $\Delta u^*+1\ge 0$ in $\O^*$. Since $u^*$ is positive in $\O^*$ and extended by zero outside of $\O^*$, we get that $\Delta u^*+1\ge 0$ on $\R^d$.
\end{proof}

In the next subsection we establish which is the optimal function $a$ on a ball of fixed radius $R$.

%%%%%%%%%%%%%%%%%%%%%%%%%%%%%%
\subsection{An optimization problem for radially decreasing functions}

Let $a:[0,R_0]\to [0,+\infty)$ be a given nonnegative measurable function. Let $R\le R_0$ and $u_{a,R}$ be the solution of the PDE
$$-\dive\big((1+a)\nabla u\big)=1\quad\text{in }B_R,\qquad u\in H^1_0(B_R).$$
Then $u_{a,R}=u_{a,R}(r)$ is radially symmetric and is a solution of the problem
$$-\frac{1}{r^{d-1}}\partial_r\Big(r^{d-1}(1+a(r))\partial_r u(r)\Big)=1\quad\text{in }(0,R),\qquad u(R)=u'(0)=0.$$
Integrating in $r$ we get that $u_{a,R}$ is explicitly given by 
$$u_{a,R}(r)=\frac1d\int_r^R\frac{s}{1+a(s)}\,ds. $$
We consider a radial nondecreasing function $g:\R^d\to\R$ such that $g(0)<0$ and the associated cost functional $\F(a,R)$ given by 
$$\F(a,R)=\int_{B_R}g(x)u_{a,R}(x)\,dx.$$
Setting
$$G(s)=\int_0^s r^{d-1}g(r)\,dr$$
we obtain
$$\F(a,R)=\frac1d\int_0^R r^{d-1}g(r)\int_r^R\frac{s}{1+a(s)}\,ds\,dr=\frac1d\int_0^R \frac{G(s)\,s}{1+a(s)}\,ds.$$
Since $g$ is nondecreasing and $g(0)<0$, we have that the set $\{g\le 0\}$ is an interval of the form $[0,R_g]$ (we set $R_g=+\infty$ in the case when $g\le 0$ on $\R^d$). Then we have
\be\label{aRinequality}
\begin{cases}
\F(a,R)\ge\F(0,R)&\text{if }R\le R_g,\\
\F(a,R)\ge\F(0,R_g)&\text{if }R>R_g.\\
\end{cases}
\ee
Indeed, if $R\le R_g$. Then $G\le0$ and \eqref{aRinequality} follows since in this case the functional $\F(a,R)$ is monotone increasing in $a$. On the other hand, if $R>R_g$, we have that 
$$\F(a,R)=\frac1d\int_0^R\frac{G(s)\,s}{1+a(s)}\,ds\ge\frac1d\int_0^{R_g}\frac{G(s)\,s}{1+a(s)}\,ds=\F(a,R_g),$$
and \eqref{aRinequality} again follows since $\F(a,R_g)$ is monotone increasing in $a$.

\begin{proof}[Proof of Proposition \ref{radprop}] Given a quasi-open set $\O\subset\R^d$ and a function $u$ solution of \eqref{equ1} we consider the ball $\O^*$ of measure $\O$ and the symmetrized function $u^*$. By the Riesz inequality we have that 
$$\int_\O g(x)u(x)\,dx\ge\int_{\O^*}g^*(x)u^*(x)\,dx.$$
By Lemma \ref{l:subharm} we get that
$$\int_{\O^*}g^*(x)u^*(x)\,dx=\F\big(a(u^*),R\big),$$
where $R$ is the radius of $\O^*$. Now the inequality \eqref{aRinequality} gives that
$$\F(a(u^*),R)\ge\F(0,R\wedge R_g)\ge\F(0,\omega_d^{-1/d}\wedge R_g).$$
If $B$ is the ball of radius $\omega_d^{-1/d}\wedge R_g$, by the definition of $\F$ we have that
$$\F(0,R\wedge R_g)=\int_B g(x)u_B(x)\,dx,$$
which concludes the proof of Proposition \ref{radprop}.
\end{proof}

\bigskip
\ack The work of the first author is part of the project 2015PA5MP7 {\it``Calcolo delle Variazioni''} funded by the Italian Ministry of Research and University. The first author is member of the Gruppo Nazionale per l'Analisi Matematica, la Probabilit\`a e le loro Applicazioni (GNAMPA) of the Istituto Nazionale di Alta Matematica (INdAM). The second author has been partially supported by the LabEx PERSYVAL-Lab (ANR-11-LABX-0025-01) project GeoSpec and the project ANR COMEDIC.

%%%%%%%%%%%%%%%%%%%%%%%%%%%%%%

\bigskip
{\small\noindent
Giuseppe Buttazzo:
Dipartimento di Matematica,
Universit\`a di Pisa\\
Largo B. Pontecorvo 5,
56127 Pisa - ITALY\\
{\tt buttazzo@dm.unipi.it}\\
{\tt http://www.dm.unipi.it/pages/buttazzo/}

\bigskip\noindent
{Bozhidar Velichkov:
Laboratoire Jean Kuntzmann (LJK), 
Universit\'e Grenoble Alpes\\
B\^atiment IMAG, 700 Avenue Centrale, 
38401 Saint-Martin-d'H\`eres - FRANCE}\\
{\tt bozhidar.velichkov@univ-grenoble-alpes.fr}\\
{\tt http://www.velichkov.it}

\end{document}